\documentclass[12pt,twoside]{amsart}
\usepackage{latexsym,amsmath,amsopn,amssymb,amsthm,amsfonts}
\usepackage[T2A]{fontenc}
\usepackage[cp1251]{inputenc}
\usepackage[russian, english]{babel}
\usepackage{graphicx}

\textwidth 150mm \textheight 220mm

\newtheorem{theorem}{Theorem}

\newtheorem{proposition}{Proposition}

\newtheorem{remark}{Remark}

\newtheorem{example}{Example}

\newtheorem{lemma}{Lemma}

\begin{document}

	\title[sub-Finsler Engel group]{Extremals of a left-invariant sub-Finsler metric on the Engel group}
	\author{V.~N.~Berestovskii, I.~A.~Zubareva}
	\thanks{The work is supported by Mathematical Center in Akademgorodok under agreement No. 075-15-2019-1613 with the
	Ministry of Science and Higher Education of the Russian Federation}
	\address{Sobolev Institute of Mathematics, \newline
		Acad. Koptyug avenue, 4, Novosibirsk, 630090, Russia}
	\address{Novosibirsk State University,\newline
			Pirogov str, 1, Novosibirsk, 630090, Russia}
			\email{vberestov@inbox.ru}
	\address{Sobolev Institute of Mathematics,\newline
		Pevtsova str., 13, Omsk, 644099, Russia}
	\email{i\_gribanova@mail.ru}
	\begin{abstract}
Using the Pontryagin Maximum Principle for the time-optimal problem in coordinates of the first kind, we find extremals of abitrary left--invariant sub--Finsler metric on the Engel group defined by a distribution of rank two.
		
		\vspace{2mm}
		\noindent {\it Keywords and phrases:} (ab)normal extremal,  extremal, left--invariant sub--Finsler metric, optimal control, polar curve,  Pontryagin Maximum Principle.
	
		\noindent {\it MSC2010:} 49J15, 49K15, 53C17.
	\end{abstract}
	\maketitle
	
	\section*{Introduction}
	
In \cite{Ber1}, it is indicated that the shortest arcs of any left-invariant (sub-)Finsler metric $d$ on a Lie group $G$ are
solutions of a left-invariant time-optimal problem with the closed unit ball $U$ of some {\it arbitrary} norm $F$ on a
subspace $\mathfrak{p}$ of the Lie algebra $(\mathfrak{g},[\cdot,\cdot])$ of the Lie group $G$ as a control region. In addition,
the subspace $\mathfrak{p}$ generates $\mathfrak{g}$. The Pontryagin Maximum Principle gives the necessary conditions for
optimal trajectories of the problem \cite{PBGM}; the curves, satisfying these conditions, are called {\it extremals}. Apparently, for the first time the shortest arcs of any left-invariant sub-Finsler metric on Lie group have been found in paper \cite{Ber2} in the case
of arbitrary sub-Finsler metric $d$ on the Heisenberg group $H$. The quotient group of $H$ by its center $Z$ is isomorphic to
the additive group $(\mathbb{R}^2,+).$ Moreover, the differential $dp$ of the canonical projection $p: H\rightarrow H/Z=\mathbb{R}^2$
is a linear isomorphism of the subspace $\mathfrak{p}$ onto $T_0\mathbb{R}^2=\mathbb{R}^2.$ The identification of the spaces $\mathfrak{p}$ and $\mathbb{R}^2$ by means of $dp$ turns $\mathbb{R}^2$ into a normed vector space $(\mathbb{R}^2,F),$ the so-called
Minkowski plane. In \cite{Ber2}, with the help of the mentioned maximum principle, it is proved that the projection with respect to
$p$ of any maximal by inclusion shortest curve in $(H,d)$ can be part of 1) a metric straight line or 2) a (closed) {\it isoperimetrix} \cite{Lei} of the Minkowski plane $(\mathbb{R}^2,F).$ 
	
Earlier in \cite{Bus}, H.~Busemann obtained the solution to the isoperimetric problem for the Minkowski plane. With a reference to \cite{Ber2}, G.A.~Noskov founds in \cite{Nos} 
the same shortest curves in $(H,d)$ on the base of \cite{Bus} and some nontrivial argument. On the other hand,  
the statement in \cite{AS5} and \cite{Loc} that Busemann found in \cite{Bus} the shortest curves of the space $(H,d)$ is erroneous. 
This is not only because at that time there was no equivalent to sub-Finsler geometry, but also because the shortest paths of the type 1) mentioned above are not connected with the isoperimetrix.
The authors of \cite{CM} (see also \cite{CMW}) supposed that they were the first who studied sub-Finsler manifolds.
But with the other name (homogeneous) "nonholonomic Finsler manifolds", they appeared yet in three works of the first author published in 1988 and 1989, including \cite{Ber1}, in connection with a characterization of {\it general homogeneous manifolds with inner metric}. Besides this, following the tradition of specialists in Finsler geometry, the authors of papers \cite{CM} and \cite{CMW} superpose additional strong conditions on the norm $F$ and apply the corresponding cumbersome apparatus.   
	
In this paper we find extremals of arbitrary left-invariant sub-Finsler metric on the Engel group, defined by a subspace  $\mathfrak{p}$ of rank two.  In papers \cite{AS1}--\cite{AS4} Ardentov and Sachkov investigated in detail left-invariant sub-Riemannian metric on the Engel group in coordinates different from ones of the fist kind which we apply.

Only classical methods and results from the monograph \cite{PBGM} are applied here. 
In paper \cite{BerZub} are proposed some new search methods of normal 
extremals of left-invariant (sub-)Finsler and (sub-)Riemannian metrics.

The authors thank  L.~V.~Lokutsievskiy for useful discussions.
	
\section{The Campbell-Hausdorff formula for the Engel group}
	
Let $X$, $Y$, $Z$, $V$ be a basis of the four-dimensional Engel algebra $\mathfrak{g}$ such that
\begin{equation}
\label{a1}
[X,Y]=Z,\quad [X,Z]=V,\quad [X,V]=[Y,V]=[Z,V]=[Y,Z]=0.
\end{equation}
Thus $\mathfrak{g}$ is a three-step nilpotent Lie algebra with two generators $X$, $Y$. Therefore, as it is known, there
exists a unique up to isomorphism connected simply connected nilpotent Lie group $G$ with the Lie algebra $\mathfrak{g},$ 
the Engel group, and the exponential mapping $\exp: \mathfrak{g}\rightarrow G$ is a diffeomorphism. This diffeomomorphism
and the Cartesian coordinates $x,\,y,\,z,\,v$ in  $\mathfrak{g}$ with the basis $X$, $Y$, $Z$, $V$ defines the
coordinates of the first kind on $G$ and thus a diffeomorphism $G\cong\mathbb{R}^4$.
	
	\begin{proposition}
		\label{product}
In the coordinates of the first kind, the multiplication on the Engel group $G\cong\mathbb{R}^4_{x,y,z,v}$ is given by 
the following rule
\begin{equation}
\label{a2}
\left(\begin{array}{c}
x_1 \\
y_1 \\
z_1 \\
v_1
\end{array}\right)\times\left(\begin{array}{c}
x_2 \\
y_2 \\
z_2 \\
v_2
\end{array}\right)=\left(\begin{array}{c}
x_1+x_2 \\
y_1+y_2 \\
z_1+z_2+\frac{1}{2}(x_1y_2-x_2y_1) \\
v_1+v_2+\frac{1}{2}(x_1z_2-x_2z_1)+\frac{1}{12}(x_1^2y_2-x_1x_2y_2-x_1x_2y_1+x_2^2y_1)
\end{array}\right).
\end{equation}
\end{proposition}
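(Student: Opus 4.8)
The plan is to exploit the very definition of coordinates of the first kind: a point with coordinates $(x,y,z,v)$ is by construction the element $\exp(xX+yY+zZ+vV)$, so the product of two points corresponds, after applying $\log=\exp^{-1}$, to the Baker--Campbell--Hausdorff series. Writing $A=x_1X+y_1Y+z_1Z+v_1V$ and $B=x_2X+y_2Y+z_2Z+v_2V$, I would compute $\log(\exp A\,\exp B)$ and read off its four coordinates. The decisive structural fact is that $\mathfrak{g}$ is three-step nilpotent: by $(\ref{a1})$ the lower central series stabilizes with $[\mathfrak{g},[\mathfrak{g},[\mathfrak{g},\mathfrak{g}]]]=0$, so every iterated bracket of length at least four vanishes. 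Consequently the BCH series truncates after the degree-three terms,
\begin{equation*}
\log(\exp A\,\exp B)=A+B+\tfrac12[A,B]+\tfrac{1}{12}\bigl([A,[A,B]]-[B,[A,B]]\bigr),
\end{equation*}
and no convergence or tail estimate is required.

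First I would compute the single bracket $[A,B]$. Using $(\ref{a1})$, only the pairs $(X,Y)$ and $(X,Z)$ contribute, giving $[A,B]=(x_1y_2-x_2y_1)Z+(x_1z_2-x_2z_1)V$. Next I would feed this expression back into the degree-three term. Since $V$ is central and $[Y,Z]=[Z,Z]=0$, the only surviving bracket inside $[A,[A,B]]$ and $[B,[A,B]]$ is $[X,Z]=V$; this yields $[A,[A,B]]=x_1(x_1y_2-x_2y_1)V$ and $[B,[A,B]]=x_2(x_1y_2-x_2y_1)V$. Substituting into the truncated formula and collecting the coefficients of $X,Y,Z,V$ reproduces $(\ref{a2})$: in particular the $V$-coordinate picks up $\tfrac12(x_1z_2-x_2z_1)$ from the quadratic term and $\tfrac{1}{12}(x_1-x_2)(x_1y_2-x_2y_1)=\tfrac{1}{12}(x_1^2y_2-x_1x_2y_1-x_1x_2y_2+x_2^2y_1)$ from the cubic term.

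There is essentially no hard step here: the argument is a finite, mechanical application of BCH in a nilpotent algebra. The only points that require care are bookkeeping ones---correctly tracking the opposite signs in $[B,[A,B]]$ versus $[A,[A,B]]$, and confirming that the degree-four BCH contribution (proportional to $[B,[A,[A,B]]]$) and all higher terms genuinely vanish, which is immediate from three-step nilpotency. As consistency checks I would verify that the resulting operation is associative and that the inverse of $(x,y,z,v)$ equals $(-x,-y,-z,-v)$, as it must, since $\exp(A)^{-1}=\exp(-A)$; both checks reduce to short substitutions that kill the quadratic and cubic corrections identically.
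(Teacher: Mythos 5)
Your proposal is correct and follows essentially the same route as the paper: both apply the Campbell--Hausdorff formula truncated at degree three for the three-step nilpotent algebra, compute $[A,B]$, $[A,[A,B]]$, $[B,[A,B]]$ from the relations (\ref{a1}), and collect coefficients (your term $-\tfrac{1}{12}[B,[A,B]]$ is just the paper's $\tfrac{1}{12}[A_2,[A_2,A_1]]$ rewritten by antisymmetry).
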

	
\begin{proof}
Set $A_i=x_iX+y_iY+z_iZ+v_iV$, $i=1,2.$ Using (\ref{a1}), we consequently obtain
$$[A_1,A_2]=(x_1y_2-x_2y_1)[X,Y]+(x_1z_2-x_2z_1)[X,Z]=$$
$$(x_1y_2-x_2y_1)Z+(x_1z_2-x_2z_1)V;$$
$$[A_1,[A_1,A_2]]=x_1(x_1y_2-x_2y_1)[X,Z]=x_1(x_1y_2-x_2y_1)V;$$
$$[A_2,[A_2,A_1]]=[[A_1,A_2],A_2]=x_2(x_1y_2-x_2y_1)[Z,X]=x_2(x_2y_1-x_1y_2)V.$$
		
Since the Lie algebra $\mathfrak{g}$ is of three-step, then it is valid the following Campbell-Hausdorff formula 
(see \cite{Post}):		$$\ln\left(\exp(A_1)\exp(A_2)\right)=A_1+A_2+\frac{1}{2}[A_1,A_2]+\frac{1}{12}[A_1,[A_1,A_2]]+\frac{1}{12}[A_2,[A_2,A_1]].$$
Therefore
$$\ln\left(\exp(A_1)\exp(A_2)\right)=(x_1+x_2)X+ (y_1+y_2)Y+\left(z_1+z_2+\frac{1}{2}(x_1y_2-x_2y_1)\right)Z+$$
$$\left(v_1+v_2+\frac{1}{2}(x_1z_2-x_2z_1)+\frac{1}{12}(x_1y_2-x_2y_1)(x_1-x_2)\right)V.$$
The last equality gives (\ref{a2}).
\end{proof}
	
It follows from the method we introduced the coordinates of the first kind and formulas (\ref{a2}) that the realization of the chosen basis of the Lie algebra $\mathfrak{g}$ as left-invariant vector fields on the Lie group $G$ in these coordinates has the form
\begin{equation}
\label{XYZV}
X= \frac{\partial}{\partial x} -\frac{y}{2}\frac{\partial}{\partial z} - \frac{z}{2}\frac{\partial}{\partial v} -\frac{xy}{12}\frac{\partial}{\partial v},\,\,
Y= \frac{\partial}{\partial y} + \frac{x}{2}\frac{\partial}{\partial z} + \frac{x^2}{12}\frac{\partial}{\partial v},\,\,
Z= \frac{\partial}{\partial z} + \frac{x}{2}\frac{\partial}{\partial v},\,\,
V = \frac{\partial}{\partial v}.
\end{equation}	
It is easy to verify that these vector fields satisfy relations (\ref{a1}).	
		
\section{Left-invariant sub-Finsler metric and the optimal control on the Engel group}
	
In \cite{Ber1}, it is said that  the shortest arcs of a left-invariant sub-Finsler metric $d$ on arbitrary connected Lie group	
$G$ defined by a left-invariant bracket generating distribution $D$ and a norm  $F$ on $D(e)$ coincide with the time-optimal solutions of
the following control system 	
\begin{equation}
\label{a3}
\dot{g}(t)=dl_{g(t)}(u(t)),\quad u(t)\in U,
\end{equation}
with measurable controls $u=u(t)$. Here $l_g(h)=gh$, the control region is the unit ball 
$$U=\{u\in D(e)\,|F(u)\leq 1\}.$$ 

Therein by virtue of the Pontryagin Maximum Principle for (local) time optimality of a control $u(t)$ and corresponding trajectory  $g(t),$ $t\in\mathbb{R}$, it is necessary the existence of a non-vanishing absolutely continuous vector-function
$\psi(t)\in T^{\ast}_{g(t)}G$ such that for almost all $t\in\mathbb{R}$ the function 
$\mathcal{H}(g(t);\psi(t);u)=\psi(t)(dl_{g(t)}(u))$ of the variable $u\in U$ attains the maximum at the point $u(t)$:
\begin{equation}
\label{m0}
M(t)=\psi(t)(dl_{g(t)}(u(t)))=\max\limits_{u\in U}\psi(t)(dl_{g(t)}(u)).
\end{equation}
In addition, the function $M(t)$, $t\in\mathbb{R}$, is constant and non-negative, $M(t)\equiv M\geq 0$.

In case when $M=0$ (respectively, $M>0$) the corresponding {\it extremal}, i.e. the curve, satisfying the Pontryagin Maximum Principle, is called {\it abnormal} (respectively, {\it normal}).

It follows from (\ref{a1}) that the left-invariant distribution $D$ on $G$ with the basis $X, Y$ for $D(e)$ is bracket generating. Let $F$ be an arbitrary norm on $D(e)$. Then the pair  $(D(e),F)$ defines a left-invariant sub-Finsler metric  
$d$ on $G$; therein $u_1X(e)+u_2Y(e)$ is identified with $u=(u_1,u_2),$ where $u_i\in\mathbb{R},$ $i=1,2.$
	
With regard to (\ref{a2}) the control system (\ref{a3}) is written as
\begin{equation}
\label{a4}
\dot{x}=u_1,\quad\dot{y}=u_2,\quad\dot{z}=\frac{1}{2}(xu_2-yu_1),
\quad\dot{v}=-\frac{1}{2}\left(z+\frac{1}{6}xy\right)u_1+\frac{1}{12}x^2u_2,
\end{equation}
where $(u_1,u_2)\in U$. 
	
{\it In consequence of left-invariance of the metric $d$ we can assume that the trajectories initiate at the unit $e\in G$, i.e. $x(0)=y(0)=z(0)=v(0)=0$}.
	
The control $u=u(t)=(u_1(t),u_2(t))\in U,$ $t\in \mathbb{R},$ defined by the Pontryagin Maximum Principle is bounded and measurable \cite{PBGM}, therefore integrable. Then the functions $x(t),$ $y(t),$ $t\in\mathbb{R},$ defined by the first two equations in (\ref{a4}) are Lipschitz, the product of any finite number	of these functions is Lipschitz, and its derivative is bounded and measurable on each compact segment of $\mathbb{R}$. Moreover, this derivative can be calculated by the usual differentiation rule of a product from differential calculus of 
functions of one variable. Therefore, the last two equations of system (\ref{a4}) can be integrated in parts, using the first two equations in (\ref{a4}) (see ss. 2.9.21, 2.9.24 in \cite{Fed}). Taking into account $x(0)=y(0)=z(0)=v(0)=0$
we sequentially get
\begin{equation}
\label{aa4}
z(t)=-\frac{1}{2}x(t)y(t)+\int\limits_0^t x(\tau)u_2(\tau)d\tau,
\end{equation}
$$v(t)=-\frac{1}{2}x(t)\left(z(t)+\frac{1}{3}x(t)y(t)\right)+\frac{1}{2}\int\limits_0^tx^2(\tau)u_2(\tau)d\tau=$$
\begin{equation}
\label{aa5}
\frac{1}{12}x^2(t)y(t)-\frac{1}{2}x(t)\int\limits_0^t x(\tau)u_2(\tau)d\tau+\frac{1}{2}\int\limits_0^tx^2(\tau)u_2(\tau)d\tau.
\end{equation}
	
According to the Pontryagin Maximum Principle, the system (\ref{a4}) corresponds to a function $\mathcal{H}(x,y,z,v;\psi_1,\psi_2,\psi_3,\psi_4;u_1,u_2)$ defined  by formula
$$\mathcal{H}=\psi_1u_1+\psi_2u_2+\frac{1}{2}\psi_3(xu_2-yu_1)-
\frac{1}{2}\psi_4\left(z+\frac{1}{6}xy\right)u_1+\frac{1}{12}\psi_4x^2u_2=h_1u_1+h_2u_2,$$
where
\begin{equation}
\label{a5}
h_1=\psi_1-\frac{1}{2}\psi_3y-\frac{1}{12}\psi_4xy-\frac{1}{2}\psi_4z,\quad h_2=\psi_2+\frac{1}{2}\psi_3 x+\frac{1}{12}\psi_4 x^2.
\end{equation}
	
The absolutely continuous vector-function $\psi=\psi(t)$ satisfies the conjugate to (\ref{a4}) system of ordinary differential
equations
\begin{equation}
\label{a6}
\left\{\begin{array}{l}
\dot{\psi_1}=\frac{1}{12}\psi_4yu_1-\left(\frac{1}{2}\psi_3+\frac{1}{6}\psi_4x\right)u_2,\\
\dot{\psi_2}=\left(\frac{1}{2}\psi_3+\frac{1}{12}\psi_4x\right)u_1,\\
\dot{\psi_3}=\frac{1}{2}\psi_4u_1,\\
\quad\dot{\psi_4}=0.
\end{array}\right.
\end{equation}
Assign an arbitrary set of initial data $\psi_i(0)=\varphi_i$, $i=1,2,3,4$, of the system (\ref{a6}). It follows from (\ref{a6}), 
the first equation in (\ref{a4}), and the initial condition $x(0)=0$ that  
\begin{equation}
\label{psi3}
\psi_4\equiv\varphi_4,\quad\psi_3=\varphi_3+\frac{1}{2}\varphi_4x,\quad\psi_2=\varphi_2+\frac{1}{2}\varphi_3x+\frac{1}{6}\varphi_4x^2.
\end{equation}
Notice that $\left(\frac{1}{2}xy+z\right)^{\cdot}=xu_2$, $\left(\frac{1}{2}xy-z\right)^{\cdot}=yu_1$ on the ground of (\ref{a4}). 
With regard to (\ref{psi3}) and (\ref{a4}) the first equation in (\ref{a6}) takes a form
$$\dot{\psi}_1=\frac{1}{12}\varphi_4\left(\frac{1}{2}xy-z\right)^{\cdot}-\frac{1}{2}\varphi_3\dot{y}-\frac{5}{12}\varphi_4\left(\frac{1}{2}xy+z\right)^{\cdot}.$$
Therefore, taking into account of the initial data of systems (\ref{a4}) and (\ref{a6}), we get 
\begin{equation}
\label{psi1}
\psi_1=\varphi_1-\frac{1}{2}\varphi_3y-\frac{1}{6}\varphi_4\left(xy+3z\right).
\end{equation} 
Inserting the last equality and (\ref{psi3}) into (\ref{a5}), we find
\begin{equation}
\label{h12}
h_1=\varphi_1-\left(\varphi_3+\frac{1}{2}\varphi_4x\right)y-\varphi_4z,\quad h_2=\varphi_2+\left(\varphi_3 +\frac{1}{2}\varphi_4 x\right)x.
\end{equation}

We notice, that $\psi_k,$ $k=1,2,3,4,$ are covector components of $\psi=\psi(t)$ relative to the coordinate system $(x,y,z,v),$ i.e.
\begin{equation}
\label{psic}
\psi_1=\psi\left(\frac{\partial}{\partial x}\right),\quad \psi_2=\psi\left(\frac{\partial}{\partial y}\right),\quad 
\psi_3=\psi\left(\frac{\partial}{\partial z}\right),\quad \psi_4= \psi\left(\frac{\partial}{\partial v}\right).
\end{equation}
Let $h_1=\psi(X),$ $h_2=\psi(Y),$ $h_3=\psi(Z),$ $h_4=\psi(V).$ Using (\ref{XYZV}), it is easy to verify that the formulas (\ref{psi3}), (\ref{psi1}), (\ref{psic}) give the same $h_1,$ $h_2,$ as in (\ref{h12}), and
\begin{equation}
\label{h34}
h_3= \psi_3 + \psi_4\frac{x}{2}=\varphi_3+\frac{1}{2}\varphi_4x + \varphi_4\frac{x}{2}=\varphi_3 +\varphi_4x,\quad h_4=\psi_4=\varphi_4.
\end{equation}
From (\ref{h12}) and (\ref{h34}) we obtain two more integrals of the Hamiltonian system (\ref{a4}), (\ref{a6}):
\begin{equation}
\label{c12}
h_4\equiv \varphi_4,\quad \mathcal{E}=\frac{h_3^2}{2}-h_2h_4\equiv \frac{\varphi_3^2}{2}- \varphi_2\varphi_4,
\end{equation}
called in \cite{AS5} {\it the Casimir functions}.

Now, using (\ref{a4}), (\ref{h12}) and (\ref{h34}),  we compute
\begin{equation}
\label{a7}
\dot{h}_1=-h_3u_2,\quad \dot{h}_2=h_3u_1.
\end{equation}
	
By virtue of the Pontryagin Maximum Principle for local time optimality of a control $u(t)$ and corresponding trajectory  $(x(t),y(t),z(t),v(t)),$ it is necessary the existence of a non-vanishing absolutely continuous vector-function $\psi(t)$ such that for almost all $t\in\mathbb{R}$ the ODE system (\ref{a6}) is satisfied and the function 
$$\mathcal{H}(x(t),y(t),z(t),v(t);\psi_1(t),\psi_2(t),\psi_3(t),\psi_4(t);u_1,u_2)$$
of the variable $u\in U$ attains the maximum at the point $u(t)$:
\begin{equation}
\label{m}
M(t)=h_1(t)u_1(t)+h_2(t)u_2(t)=\max\limits_{u\in U}(h_1(t)u_1+h_2(t)u_2).
\end{equation}
	
Relations (\ref{a4}), (\ref{h12}) and (\ref{m}) imply that under multiplication of functions $\psi_i(t)$, $i=1,2,3,4$, by a positive constant $k$ the trajectory $(x(t),y(t),z(t),v(t))$ does not change, while $M$ is multipled by $k$. Therefore {\it in case when $M>0$ we shall assume that} $M=1$. 
{\it Further in this section we consider this case}.
	
It follows from (\ref{m}) that $(h_1(t),h_2(t))$ from (\ref{h12}) and $(\varphi_1,\varphi_2)=(h_1(0),h_2(0))$ lie on the boundary
$\partial U^{\ast}$ of the polar figure $U^{\ast}=\{h\,|F_{U}(h)\leq 1\}$ to $U,$ where $F_{U}$ is a norm on $H=\{h\},$ equal
to the support Minkowski function of the body $U$:	
$$F_U(h)=\max\limits_{u\in U}h\cdot u.$$
In addition, $(H,F_U)$ is the conjugate normed vector space to $(D(e),F)$ and $(U^{\ast})^{\ast}=U$ in consequence of 
reflexivity of finite-dimensional normed vector spaces. Moreover, using (\ref{a7}) and (\ref{m}), we get
\begin{equation}
\label{area}
h_1(t)\dot{h}_2(t)-\dot{h}_1(t)h_2(t)=(\varphi_3+\varphi_4x(t))(h_1(t)u_1(t)+h_2(t)u_2(t))=\varphi_3+\varphi_4x(t).
\end{equation} 
	
Let $r=r(\theta)$, $\theta\in\mathbb{R}$, be a polar equation of the curve $F_U(x,y)=1$. At every point $\theta\in \mathbb{R}$
there exist one-sided derivatives of $r=r(\theta)$ (and with possible exclusion of no more than countable number of values
$\theta$ there exists the usual derivative $r'(\theta)$). For simplicity {\it we shall denote every value between these derivatives by} $r'(\theta)$. Then for $\theta=\theta(t),$
\begin{equation}
\label{h}
h_1(t)=h_1(\theta)=r(\theta)\cos\theta,\quad h_2(t)=h_2(\theta)=r(\theta)\sin\theta,
\end{equation}   
\begin{equation}
\label{hder}  
h'_1(\theta)=-(r(\theta)\sin\theta-r^{\prime}(\theta)\cos\theta),\quad h'_2(\theta)=(r^{\prime}(\theta)\sin\theta+r(\theta)\cos\theta).
\end{equation}
	
Independently on the existence of usual derivative (\ref{hder}), (\ref{area}) implies the existence of usual derivative 
for the doubled oriented area  
$$\sigma(t)=2S(\theta(t))=\int_0^{\theta(t)}r^2(\theta)d\theta$$ 
of the sector, counted from $0.$ In addition, by (\ref{h34}) and (\ref{area}) 
\begin{equation}
\label{derst}
\dot{\sigma}(t)=\varphi_3+\varphi_4x(t)=r^2(\theta(t))\dot{\theta}(t),\quad \dot{\theta}(t)=\frac{\dot{\sigma}(t)}{r^2(\theta(t))}.
\end{equation}
If we square the second equality in (\ref{derst}), we get by  (\ref{h12})
$$r^4(\theta)\dot{\theta}^2=\varphi_3^2+2\varphi_4\left(\varphi_3+\frac{1}{2}\varphi_4x\right)x=\varphi_3^2+2\varphi_4(h_2-\varphi_2),$$
\begin{equation}
\label{dt}
\dot{\theta}^2=\frac{\varphi_3^2+2\varphi_4(r(\theta)\sin\theta-\varphi_2)}{r^4(\theta)}.
\end{equation}
On the ground of (\ref{a4}), (\ref{a7}), and (\ref{derst}),
\begin{equation}
\label{pen}
\ddot{\sigma}(t)=\varphi_4u_1(t),
\end{equation}
\begin{equation}
\label{energy}
\mathcal{E}=\mathcal{E}(t)=\frac{1}{2}(\dot{\sigma}(t))^2-\varphi_4h_2(t)={\rm const}.
\end{equation}
	
\begin{remark}
(\ref{energy}) is equivalent to (\ref{dt}).	
\end{remark} 
	
\begin{remark}
In the notation from \cite{Loc}, the equation (\ref{pen}) is written as
\begin{equation}
\label{pen2}
\ddot{\theta^{\circ}}=\varphi_4\cos_{\Omega}\theta,
\end{equation}
it's analogue to the equation (5), when $\varphi_4\neq 0$: $\ddot{\theta^{\circ}}=\sin_{\Omega}\theta$ from \cite{Loc}.
In that paper $\Omega=U,$  $\Omega^{\circ}=U^{\ast};$ $\theta^{\circ}$ is our $\sigma(\theta)$ for $\Omega^{\circ},$
in \cite{Loc}, $\theta$  plays a role of $\sigma$ for $\Omega,$ 
$$\cos_{\Omega}\theta=u_1(\theta),\quad \sin_{\Omega}\theta=u_2(\theta),\quad \cos_{\Omega^{\circ}}\theta^{\circ}=h_1(\theta^{\circ}),\quad \sin_{\Omega^{\circ}}\theta^{\circ}=h_2(\theta^{\circ}),$$
$$\cos_{\Omega^{\circ}}\theta^{\circ}\cos_{\Omega}\theta + \sin_{\Omega^{\circ}}\theta^{\circ}\sin_{\Omega}\theta=1.$$  
Figure 8 in \cite{Loc} shows a schematic representation to a phase portrait of a "generalized mathematical pendulum" (5). 
On the basis of the portrait, there is also given some general verbal, but rather detailed, information on the solutions to equation (5) and its application (including) to the Heisenberg, the Cartan, and the Engel groups with the given norm $F$ on $D(e)$ for left-invariant two-dimensional totally nonholonomic distribution $D$ on these Lie groups. 
In other words, an analogue of our function $\theta=\theta(t)$ for these Lie groups is described in sufficient detail in \cite{Loc}. At the same time, the corresponding extremals on these groups are not searched in \cite{Loc}.  	
\end{remark} 	
	
We claim that {\it in general case} for $\theta=\theta(t),$	
\begin{equation}
\label{u}
\dot{x}(t)=u_1(\theta)=\frac{r^{\prime}(\theta)\sin\theta+r(\theta)\cos\theta}{r^2(\theta)},\quad
\dot{y}(t)=u_2(\theta)=\frac{r(\theta)\sin\theta-r^{\prime}(\theta)\cos\theta}{r^2(\theta)},
\end{equation}
where $u_1(\theta)=h'_2(\theta)/r^2(\theta),$ $u_2(\theta)=-h'_1(\theta)/r^2(\theta)$ due to (\ref{hder}).
Indeed, the following two equalities must hold: 
$$h_1(\theta)u_1(t)+h_2(\theta)u_2(\theta)=1,\quad h'_1(\theta)u_1(t)+h'_2(\theta)u_2(\theta)=0.$$
It is easy to see that the first of these equalities follows from (\ref{h}) and (\ref{u}), while the second is a corollary from (\ref{hder}) 
and (\ref{u}).
	
It follows from (\ref{a4}) that
\begin{equation}
\label{y}
\left(3v+\frac{1}{2}xz\right)^{\cdot}=-\frac{3}{2}\dot{x}z+\frac{1}{2}x\dot{z}+\frac{1}{2}\dot{x}z+\frac{1}{2}x\dot{z}=x\dot{z}-\dot{x}z,
\end{equation}
so on the base of (\ref{a4}), (\ref{h12}), and (\ref{m}) we get, omitting for brevity the variable $t$,
$$h_1u_1+h_2u_2=\varphi_1\dot{x}+\varphi_2\dot{y}+\left(\varphi_3+\frac{1}{2}\varphi_4x\right)(x\dot{y}-\dot{x}y)-
\varphi_4\dot{x}z=\left(\varphi_1x+\varphi_2y+2\varphi_3z\right)^{\cdot}+$$
$$\varphi_4(x\dot{z}-z\dot{x})=
\left(\varphi_1x+\varphi_2y+2\varphi_3z+3\varphi_4v+\frac{1}{2}\varphi_4xz\right)^{\cdot}=1.$$
Taking into account of the initial data of system (\ref{a4}), we obtain
\begin{equation}
\label{eq}
\varphi_1x+\varphi_2y+\left(2\varphi_3+\frac{1}{2}\varphi_4x\right)z+3\varphi_4v=t.
\end{equation}
	
\section{Search for sub-Finsler extremals}
\label{geod}
	
{\bf 1.} Let us consider an abnormal case. It is valid the following proposition.
	
\begin{proposition}
\label{ageod1}
An abnormal extremal on the Engel group starting at the unit is a one-parameter subgroup
\begin{equation}
\label{anorm}
x(t)\equiv 0,\quad y(t)=\pm\frac{t}{F(0,1)},\quad z(t)\equiv 0,\quad v(t)\equiv 0
\end{equation}
and is not strongly abnormal.
\end{proposition}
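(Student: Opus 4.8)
The plan is to begin from the defining abnormal condition $M=0$, use the dual--norm description of the maximum in (\ref{m}) to force the covector data to vanish, then propagate this information through the integrals (\ref{h12}), (\ref{h34}) and the equations (\ref{a4}), (\ref{a7}) to recover the trajectory, and finally to produce a \emph{normal} covector along the very same curve.

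First I would observe that by (\ref{m}) one has $M(t)=\max_{u\in U}(h_1(t)u_1+h_2(t)u_2)=F_U(h_1(t),h_2(t))$, and since $F_U$ is a norm on $H$, the abnormality $M\equiv 0$ is equivalent to $h_1(t)\equiv h_2(t)\equiv 0$. Differentiating and using (\ref{a7}), $\dot h_1=-h_3u_2=0$ and $\dot h_2=h_3u_1=0$; since an optimal control has $F(u)\equiv 1$, the vector $(u_1,u_2)$ never vanishes, so by continuity $h_3\equiv 0$. Now I would split on $\varphi_4$. If $\varphi_4=0$, then (\ref{h12})--(\ref{h34}) give $h_3\equiv\varphi_3$, $h_2\equiv\varphi_2$ and $h_1\equiv\varphi_1$, so all $\varphi_i$ vanish, contradicting the nontriviality of $\psi$; hence $\varphi_4\neq 0$. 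Then $h_3=\varphi_3+\varphi_4x$ together with $\dot h_3=\varphi_4u_1=0$ gives $u_1\equiv 0$, whence $x\equiv 0$ from $x(0)=0$. Back--substitution into (\ref{h12})--(\ref{h34}) yields $\varphi_3=0$ (from $h_3\equiv 0$), $\varphi_2=0$ (from $h_2\equiv 0$), and $\varphi_1=0$ with $z\equiv 0$ (from $h_1=\varphi_1-\varphi_4z\equiv 0$ and $z(0)=0$); the fourth equation of (\ref{a4}) then forces $v\equiv 0$. Thus the abnormal covector is $(\varphi_1,\varphi_2,\varphi_3,\varphi_4)=(0,0,0,\varphi_4)$ with $\varphi_4\neq 0$.

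At this stage the only surviving control is $u=(0,u_2)$ with $F(0,u_2)=1$, i.e. $u_2=\pm 1/F(0,1)$, so the trajectory is the one--parameter subgroup (\ref{anorm}) generated by $Y$. To prove that it is not strongly abnormal, I would exhibit a normal ($M=1$) lift of the \emph{same} curve. Put $\varphi_3=\varphi_4=0$ and let $(\varphi_1,\varphi_2)\in\partial U^{\ast}$ be a supporting functional of $U$ at the boundary point $(0,\pm 1/F(0,1))$, normalized by $F_U(\varphi_1,\varphi_2)=1$; such a functional exists for an arbitrary norm by convex duality, and it forces $\varphi_2=\pm F(0,1)$. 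With this data (\ref{h12}) gives $h_1\equiv\varphi_1$ and $h_2\equiv\varphi_2$ constant, (\ref{a7}) holds since $h_3\equiv 0$, the maximum in (\ref{m}) is attained at $u=(0,\pm 1/F(0,1))$ and equals $M=1>0$, and the resulting normal extremal reproduces exactly (\ref{anorm}). Hence the curve admits both an abnormal and a normal covector, so it is not strongly abnormal.

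The back--substitution in the second paragraph is routine; the genuine content is the last step. The main obstacle is to guarantee the normal lift for an \emph{arbitrary} norm $F$: one must invoke the existence of a supporting hyperplane to $U$ at the contact point $(0,\pm 1/F(0,1))$ rather than a smooth gradient, and, when $\partial U$ is not strictly convex, one must check that $(0,\pm 1/F(0,1))$ is an admissible maximizer among possibly several, so that the chosen normal covector indeed selects the abnormal trajectory.
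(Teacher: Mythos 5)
Your proof is correct, and its first half is essentially the paper's own argument: from $M\equiv 0$ you get $h_1\equiv h_2\equiv 0$, then (\ref{a7}) together with the non-vanishing of $(u_1,u_2)$ forces $h_3=\varphi_3+\varphi_4x\equiv 0$, whence $\varphi_1=\varphi_2=\varphi_3=0$, $\varphi_4\neq 0$, $x\equiv z\equiv v\equiv 0$ and $u_2\equiv\pm 1/F(0,1)$; your explicit case split on $\varphi_4$ is just a cleaner reorganization of the paper's appeal to the explicit formulas for $\psi_i$ and the non-vanishing of $\psi$. Where you genuinely diverge is the ``not strongly abnormal'' step, and there your version is actually stronger than the paper's. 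The paper exhibits the constant covector $\psi=(0,\pm F(0,1),0,0)$ and asserts that it realizes (\ref{m}) with $M\equiv 1$; this requires $\max_{u\in U}u_2=1/F(0,1)$, i.e.\ that $(0,\pm F(0,1))$ be a supporting functional of $U$ at the contact point $(0,\pm 1/F(0,1))$, which can fail for an arbitrary norm. For instance, for $F(u_1,u_2)=\max\{|u_1|,|u_2-u_1|\}$ one has $F(0,1)=1$ but $\max_{u\in U}u_2=2$ (attained at $(1,2)$), so with the paper's covector the maximum in (\ref{m}) equals $2$ while its value at the control $u=(0,1)$ of the curve (\ref{anorm}) is $1$: the maximum condition fails. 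Your construction --- take for $(\varphi_1,\varphi_2)\in\partial U^{\ast}$ \emph{any} supporting functional at $(0,\pm 1/F(0,1))$, which forces $\varphi_2=\pm F(0,1)$ but permits $\varphi_1\neq 0$ --- is exactly what is needed: then $h_1\equiv\varphi_1$, $h_2\equiv\varphi_2$ are constant, the adjoint system (\ref{a6}) holds, and the maximum in (\ref{m}) equals $1$ and is attained (not necessarily uniquely, which is all the Maximum Principle demands) at the control of (\ref{anorm}). So your route repairs and generalizes the paper's final step; what it buys is validity for every norm $F$, whereas the paper's covector works only when the unit ball $U$ admits a horizontal supporting line at $(0,\pm 1/F(0,1))$ (e.g.\ when $U$ is symmetric in $u_1$). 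One gloss you share with the paper: both pass from $|u_2(t)|\equiv 1/F(0,1)$ to a \emph{constant} sign of $u_2$, although with $h_1\equiv h_2\equiv 0$ the maximum condition is vacuous and so does not by itself exclude sign-switching measurable controls; this step is asserted rather than argued in both texts.
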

	
\begin{proof}
Assume that $M=0$. Then we obtain from the maximum condition that $h_1(t)=h_2(t)\equiv 0$ and $\varphi_1=\varphi_2=0$. Since $u_1(t)$ and $u_2(t)$ could not simultaneously vanish at any $t\in\mathbb{R}$, then $\varphi_3+\varphi_4x(t)\equiv 0$ on the base of (\ref{a7}). This implies that $\varphi_3=0$ and $x(t)\equiv 0$ because $x(0)=0$. Hence in consequence of (\ref{psi3}) we get 
$\psi_2(t)=\psi_3(t)\equiv 0,$ $\psi_4(t)\equiv \varphi_4,$ and $\psi_1(t)=\frac{1}{2}\varphi_4 z(t)$ on the ground of 
(\ref{psi1}) and the first equality in (\ref{h12}). Therefore, $\varphi_4\neq 0$ because $\psi(t)$ does not vanish.
		
Since $x(t)\equiv 0$, then $u_1(t)\equiv 0$ according to the first equality (\ref{a4}). Hence we obtain successively from
the third and the fourth equations in (\ref{a4}) as well as of the initial data $z(0)=v(0)=0$ that $z(t)=v(t)\equiv 0$. 
		
Further, since $u_1(t)\equiv 0$ and $F(u_1(t),u_2(t))\equiv 1$, then $u_2(t)\equiv\pm\frac{1}{F(0,1)}$. This, 
the second equation in (\ref{a4}), and the initial condition $y(0)=0$ imply that $y(t)=\pm\frac{t}{F(0,1)}$, and we get
(\ref{anorm}). 
		
In consequence of (\ref{a2}), this extremal is one of two one-parameter subgroups
$$g_1(t)=\exp\left(\frac{tY}{F(0,1)}\right),\quad g_2(t)=g_1(-t)=g_1(t)^{-1},\quad t\in\mathbb{R},$$ 
satisfies (\ref{m}) with $M(t)\equiv 1$ for constant covector function
$$\psi(t)=(0,\pm\varphi_2,0,0)=(0,\pm F(0,1),0,0)=(0,h_2(t),0,0),$$
subject to differential equations (\ref{a6}) and (\ref{a7}); therefore it is normal relative to this covector function, 
is not strongly abnormal, and is a geodesic, moreover, is a metric straight line (see Proposition \ref{norm1} below). 
\end{proof}
	
{\bf 2.} Set $M=1$. 
	
\begin{theorem}
\label{mt}	
For every extremal on the Engel group starting at the unit,  
\begin{equation}
\label{xt}
x(t)=\int_0^t\frac{[r^{\prime}(\theta(\tau))\sin\theta(\tau)+r(\theta(\tau))\cos\theta(\tau)]d\tau}{r^2(\theta(\tau))},
\end{equation} 
\begin{equation}
\label{yot}
y(t)=\int_0^t\frac{[r(\theta(\tau))\sin\theta(\tau)-r^{\prime}(\theta(\tau))\cos\theta(\tau)]d\tau}{r^2(\theta(\tau))}
\end{equation}
with arbitrary measureable integrands of indicated view and continuously differentiable function 
$\theta=\theta(t),$ satisfying (\ref{derst}), (\ref{dt}).   	
\end{theorem}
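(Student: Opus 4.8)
The plan is to assemble the statement from the relations already derived in the preceding section; almost all of the analytic work is done, and what remains is to package it correctly and to justify the regularity and measurability assertions. First I would recall that the maximum condition (\ref{m}) forces the covector components $(h_1(t),h_2(t))$ from (\ref{h12}) to lie on the boundary $\partial U^{\ast}$ of the polar figure, whose polar equation is $r=r(\theta)$. Since $U^{\ast}$ is a convex body with the origin in its interior, the polar map $\theta\mapsto(r(\theta)\cos\theta,r(\theta)\sin\theta)$ is a homeomorphism of the circle onto $\partial U^{\ast}$; this defines $\theta=\theta(t)$ uniquely through (\ref{h}). Because $h_1(t),h_2(t)$ are absolutely continuous in $t$ by (\ref{h12}) (they are affine combinations of the Lipschitz functions $x(t),y(t),z(t)$), the composition $\theta(t)$ is continuous.

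Next I would recover the controls. By the computation leading to (\ref{u}), the two scalar identities $h_1u_1+h_2u_2=1$ and $h_1'u_1+h_2'u_2=0$ (the first being the maximum value $M=1$, the second expressing that $(h_1,h_2)$ moves tangentially along $\partial U^{\ast}$) determine $u_1(\theta),u_2(\theta)$ as the expressions displayed in (\ref{u}), which are exactly the integrands in (\ref{xt}) and (\ref{yot}). Since $\dot x=u_1$ and $\dot y=u_2$ by the first two equations of (\ref{a4}), integrating from $0$ to $t$ and using $x(0)=y(0)=0$ yields (\ref{xt}) and (\ref{yot}) verbatim.

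It then remains to verify that $\theta$ is continuously differentiable and satisfies (\ref{derst}) and (\ref{dt}). For the $C^1$ property I would invoke (\ref{derst}) in the form $\dot\theta(t)=\dot\sigma(t)/r^2(\theta(t))$ with $\dot\sigma(t)=\varphi_3+\varphi_4x(t)$. The numerator is continuous since $x(t)$ is continuous, and the denominator $r^2(\theta(t))$ is continuous and bounded away from zero because $r$ is a continuous positive function (the polar radius of a convex body) and $\theta(t)$ is continuous; crucially this uses only continuity of $r$, not its differentiability. Hence $\dot\theta$ is continuous and $\theta\in C^1$. Relation (\ref{dt}) was already obtained by squaring (\ref{derst}) and substituting (\ref{h12}), and it is equivalent to (\ref{energy}), so it holds automatically.

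The delicate point, and the only place that requires care, is the phrase \emph{arbitrary measurable integrands of the indicated view}, which accounts for the countably many values of $\theta$ at which $r'(\theta)$ fails to exist. These are precisely the corners of $\partial U^{\ast}$, dual to the straight edges of $\partial U$; as $r'(\theta_0)$ ranges between the one-sided derivatives $r'(\theta_0^{-})$ and $r'(\theta_0^{+})$, the pair $(u_1,u_2)$ given by (\ref{u}) sweeps out the corresponding edge of $\partial U$. On any $t$-interval where $\theta$ is frozen at such a corner (which happens exactly when $\dot\sigma\equiv 0$ there), the maximum in (\ref{m}) is attained along a whole segment, and the genuine optimal control $u(t)$ supplied by the maximum principle is some bounded measurable selection from that segment. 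I expect this bookkeeping to be the main obstacle: one must check that choosing $r'(\theta(t))$ to be the measurable function of $t$ matching the actual control reproduces $u_1(t),u_2(t)$ almost everywhere, so that the integral formulas (\ref{xt}),(\ref{yot}) remain valid with a measurable integrand even in the non-smooth case. Away from such intervals $\theta$ is strictly monotone, each non-smooth angle is hit at a single instant, and the ambiguity is confined to a set of measure zero in $t$, so it does not affect the integrals.
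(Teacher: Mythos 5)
Your proposal follows essentially the same route as the paper's own (very condensed) proof: the maximum condition puts $(h_1,h_2)$ on $\partial U^{\ast}$, the two identities $h_1u_1+h_2u_2=1$ and $h_1'u_1+h_2'u_2=0$ pin the control down to the form (\ref{u}), and integrating $\dot x=u_1$, $\dot y=u_2$ with $x(0)=y(0)=0$ gives (\ref{xt}), (\ref{yot}). Your supplementary work --- continuity of $\theta(t)$ from the Lipschitz functions $h_1,h_2$, the $C^1$ property via $\dot\theta=\dot\sigma/r^2(\theta)$ with $\dot\sigma=\varphi_3+\varphi_4x(t)$ continuous and $r$ bounded away from zero, and the treatment of measurable selections of $r'(\theta)$ at corners of $\partial U^{\ast}$ --- is a correct elaboration of what the paper leaves implicit in its convention that $r'(\theta)$ denotes any value between the one-sided derivatives.

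There is, however, one genuine gap. The theorem is stated for \emph{every} extremal, whereas everything you do presupposes the normal case $M=1$: your opening step, that (\ref{m}) forces $(h_1(t),h_2(t))$ onto $\partial U^{\ast}$, simply fails for abnormal extremals, where $M=0$ and the maximum condition instead yields $h_1(t)=h_2(t)\equiv 0$, so that neither the normalization $h_1u_1+h_2u_2=1$ nor membership of $(h_1,h_2)$ in $\partial U^{\ast}$ is available, and (\ref{u}) cannot be derived. The paper closes exactly this hole in the first sentence of its proof: by Proposition \ref{ageod1}, every abnormal extremal is one of the one-parameter subgroups (\ref{anorm}) and is \emph{not strictly abnormal} --- it also satisfies the Maximum Principle with $M(t)\equiv 1$ for the constant covector function $\psi(t)=(0,\pm F(0,1),0,0)$ --- hence it is covered by the normal analysis with a suitable choice of $(\varphi_1,\varphi_2,\varphi_3,\varphi_4)$ (indeed with $\theta(t)\equiv\theta_0$ constant, consistent with (\ref{derst}), (\ref{dt})). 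You must either invoke that proposition or re-derive its conclusion; without it your argument proves the formulas only for normal extremals, not for all of them as the statement requires.
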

	
\begin{proof}
By Proposition \ref{ageod1}, every extremal is normal for corresponding control. 
On the ground of above assertions, any control has a view (\ref{u}) which implies (\ref{xt}), (\ref{yot}). 	
\end{proof}
	
\begin{remark}
On parts of the extremals with $\dot{\theta}(t)\neq 0$ for calculation of functions $x(t),$ $y(t)$ by formulas 
(\ref{xt}), (\ref{yot}) have matter only those values  $\theta=\theta(t)$ where the usual derivative $r'(\theta)$ exists.
\end{remark}	
	
{\bf 2.1.} Let us assume that $\varphi_3=\varphi_4=0$. The following proposition is true.
	
\begin{proposition}
\label{norm1}
For any extremal on the Engel group with above conditions and origin at the unit, $\theta(t)\equiv\theta_0,$ 
$t\in\mathbb{R},$ for some $\theta_0.$ In addition, every such extremal is a one-parameter subgroup if and only if
there exists usual derivative $r'(\theta_0).$ In general case, any indicated extremal is a metric straight line.
\end{proposition}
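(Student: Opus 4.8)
The plan is to treat the three assertions in turn, extracting as much as possible directly from the relations already established above.

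First, for $\theta(t)\equiv\theta_0$: setting $\varphi_3=\varphi_4=0$ in (\ref{derst}) gives $\dot\sigma(t)=\varphi_3+\varphi_4 x(t)\equiv 0$, and since $r^2(\theta)>0$ the identity $\dot\sigma=r^2(\theta)\dot\theta$ forces $\dot\theta(t)\equiv 0$; equivalently one reads this off (\ref{dt}), whose numerator vanishes identically. Hence $\theta(t)\equiv\theta_0$ and, by (\ref{h12}), the covector part $(h_1,h_2)=(\varphi_1,\varphi_2)$ is constant and lies on $\partial U^{\ast}$.

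For the one-parameter-subgroup dichotomy I would argue through the control. If $r'(\theta_0)$ exists, then by (\ref{u}) the control $u=(u_1(\theta_0),u_2(\theta_0))$ is a single constant vector; the first two equations of (\ref{a4}) give $x(t)=u_1(\theta_0)\,t$, $y(t)=u_2(\theta_0)\,t$, and substituting these into (\ref{aa4}) and (\ref{aa5}) I expect the quadratic and cubic terms to cancel, yielding $z(t)\equiv 0$ and $v(t)\equiv 0$. The resulting curve $t\mapsto(u_1 t,u_2 t,0,0)$ is exactly $\exp(t(u_1(\theta_0)X+u_2(\theta_0)Y))$ in coordinates of the first kind, i.e.\ a one-parameter subgroup. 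Conversely, if $r'(\theta_0)$ fails to exist, the maximum in (\ref{m}) against the fixed covector $(\varphi_1,\varphi_2)$ is attained not at one point but along the whole nondegenerate segment of $\partial U$ dual to the corner of $\partial U^{\ast}$; choosing a non-constant measurable selection $u(t)$ from this segment produces an admissible extremal whose coordinate $x(t)=\int_0^t u_1(\tau)\,d\tau$ is not linear in $t$, so it is not a one-parameter subgroup. This proves that \emph{all} extremals with the given $\theta_0$ are one-parameter subgroups precisely when $r'(\theta_0)$ exists.

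For the metric-straight-line claim I would use a calibration. With $\varphi_3=\varphi_4=0$, equation (\ref{eq}) reduces to $f:=\varphi_1 x+\varphi_2 y=t$ along the extremal $\gamma$. The function $f$ is $1$-Lipschitz for $d$: along any admissible curve with control $u\in U$ one has $\dot f=\varphi_1u_1+\varphi_2u_2\le F_U(\varphi_1,\varphi_2)\,F(u)\le 1$ by the defining duality of the support function $F_U$ together with $(\varphi_1,\varphi_2)\in\partial U^{\ast}$, whence $|f(g_1)-f(g_2)|\le d(g_1,g_2)$. Since $\gamma$ is unit-speed ($F(u)=1$ a.e.\ by the maximum condition), its length over $[s,t]$ equals $|t-s|$, giving $d(\gamma(s),\gamma(t))\le|t-s|$; combined with $|f(\gamma(t))-f(\gamma(s))|=|t-s|$ this yields $d(\gamma(s),\gamma(t))=|t-s|$ for all $s,t$, so $\gamma$ is a metric straight line.

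The step I expect to be the main obstacle is the converse half of the dichotomy: making precise the duality that non-existence of $r'(\theta_0)$ corresponds to a genuine flat edge of $\partial U$ exposed by $(\varphi_1,\varphi_2)$, and verifying that a non-constant selection from that edge is still an admissible extremal (it must satisfy (\ref{m}) and the conjugate system (\ref{a6}) for the \emph{same} constant covector). The calibration argument is then reassuring, since it shows uniformly --- for constant or varying control alike --- that every extremal in this family is a metric straight line.
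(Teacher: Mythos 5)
Your proposal is correct, and its first two parts coincide with the paper's own argument: the constancy of $\theta(t)$ is read off (\ref{derst}) exactly as in the paper, and the dichotomy is settled the same way --- existence of $r'(\theta_0)$ is equivalent (by polarity/reflexivity) to uniqueness of the maximizing control in (\ref{m}), the constant control gives $x=u_1(\theta_0)t$, $y=u_2(\theta_0)t$ and, after the cancellations in (\ref{aa4}), (\ref{aa5}), $z\equiv v\equiv 0$, i.e.\ a one-parameter subgroup; a corner of $\partial U^{\ast}$ at $\theta_0$ dually exposes a flat edge of $\partial U$, and any non-constant measurable selection from that edge is still an extremal for the \emph{same} constant covector (with $\varphi_3=\varphi_4=0$ the conjugate system (\ref{a6}) makes $\psi\equiv(\varphi_1,\varphi_2,0,0)$ automatically, so the admissibility worry you flag as the ``main obstacle'' is in fact immediate; the paper absorbs it into Theorem \ref{mt}). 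Where you genuinely diverge is the metric-straight-line claim: the paper projects to the Minkowski plane, notes that lengths of admissible curves are preserved by the projection, and invokes that the projected curves (unit-speed curves whose velocity stays in one exposed face of $\partial U$) are metric straight lines there; you instead calibrate directly in $G$ by the function $f=\varphi_1x+\varphi_2y$, which equals $t$ along the extremal by (\ref{eq}) and is $1$-Lipschitz for $d$ by the duality $|\varphi\cdot u|\le F_U(\varphi)F(u)$ with $F_U(\varphi_1,\varphi_2)=1$. The two routes are equivalent in substance --- your calibration is precisely the proof of the Minkowski-plane fact the paper leaves as ``one can easily see'' --- but yours has the merit of being self-contained and uniform over constant and non-constant controls, while the paper's factorization through the projection isolates the two-dimensional geometric content.
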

	
\begin{proof}
The first statement follows from (\ref{derst}). 	
		
In addition, by Theorem \ref{mt}, every admissible control $(u_1(t),u_2(t))=(u_1(\theta_0),u_2(\theta_0))$ 
from (\ref{u}) is constant if and only if there exists the usual derivative $r'(\theta_0),$ what is equivalent to
condition that the system (\ref{a4}) has unique solution, a one-parameter subgroup
$$x(t)=u_1(\theta_0)t,\quad y(t)=u_2(\theta_0)t,\quad z(t)\equiv 0,\quad v(t)\equiv 0.$$
				
Notice that there exists at most countable number of values $\theta_0,$ for which the second statement is false. 
For any such $\theta_0,$ $x(t),$ $y(t),$ $t\in\mathbb{R},$ are as in (\ref{xt}), (\ref{yot}) with
$\theta(\tau)\equiv \theta_0$ and arbitrary measurable integrands $u_1(\tau),$ $u_2(\tau)$ of the type,
indicated in Theorem \ref{mt}, and the functions $z(t)$ and $v(t)$ are defined by formulas (\ref{aa4}) and (\ref{aa5}) respectively.

It follows from (\ref{a4}) that the length of any arc for the curve $(x(t),y(t),z(t),v(t))$ in $(G,d)$ is equal to the 
length of corresponding arc for its projection $(x(t),y(t))$  on the Minkowski plane. One can easily see that projections 
of indicated curves are metric straight lines on the Minkowski plane. Therefore the curves itself are metric straight lines.  
\end{proof}
	
\begin{remark}
The metric straight lines are obtained only in the case of Proposition \ref{norm1}, in particular, Proposition \ref{ageod1}.	
\end{remark}
	
{\bf 2.2.} Let us consider the case $\varphi_4=0$, $\varphi_3\neq 0$.
	
\begin{proposition}
\label{q2}
Let $(x,y,z,v)(t)$, $t\in\mathbb{R}$, be an extremal with conditions $x(0)=y(0)=z(0)=v(0)=0$ on the Engel group such that
$\varphi_4=0$, $\varphi_3\neq 0$. Then the functions $\theta(t),$ $h(t)=(h_1(t),h_2(t))$, $x(t),$ $y(t)$ are periodic with common period $L=2S_0/|\varphi_3|,$ where  $S_0$ is the area of the figure $U^{\ast}.$ The 
projection $(x,y)(t)$ of the extremal onto the Minkowski plane $z=v=0$ with the norm $F$ has a form
\begin{equation}
\label{xy}
x(t)= \frac{h_2(t)-\varphi_2}{\varphi_3},\quad  y(t)=- \frac{h_1(t)-\varphi_1}{\varphi_3},
\end{equation} 
 and it is a parametrized by the arc length periodic curve on an isoperimetrix.  In addition, $h_1=h_1(\theta(t)),$ $h_2=h_2(\theta(t))$ are given by formulas (\ref{h}), $\theta=\theta(t)$ is the inverse function to the function  
 $t(\theta)= \int_{\theta_0}^{\theta}(r^2(\xi)/\varphi_3)d\xi,$
 $$z(t)=\frac{t-\varphi_1x(t)-\varphi_2y(t)}{2\varphi_3}$$ 
 and $z(t)$ is equal to oriented area on
 the Euclidean plane with the Cartesian coordinates $x$, $y$, traced by rectilinear segment connecting the origin with
 point $(x(\tau),y(\tau))$, $\tau\in [0,t]$. In addition, $v=v(t)$ is defined by formula (\ref{aa5}).
 \end{proposition}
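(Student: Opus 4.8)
The plan is to exploit the case hypothesis $\varphi_4=0$, $\varphi_3\neq 0$ to collapse the general machinery of Section~2 into closed-form expressions, and then identify the projected curve as an isoperimetrix. First I would substitute $\varphi_4=0$ into the integrals of motion. By (\ref{h12}), the Hamiltonian coordinates simplify to $h_1=\varphi_1-\varphi_3 y$ and $h_2=\varphi_2+\varphi_3 x$; solving these two linear relations for $x$ and $y$ gives exactly (\ref{xy}). This is the key structural observation: when $\varphi_4=0$, the map $\theta\mapsto(h_1,h_2)$ on $\partial U^{\ast}$ is affinely conjugate to the projected trajectory $(x,y)$, so the projection literally traces the polar figure $U^{\ast}$ (scaled and rotated by $90^\circ$), which is the defining property of an isoperimetrix of the Minkowski plane.

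Next I would establish the parametrization and periodicity. Setting $\varphi_4=0$ in (\ref{derst}) yields $\dot\sigma(t)=\varphi_3$, a nonzero constant, so $\sigma(t)=\varphi_3 t$ (using $\sigma(0)=0$) and hence $\theta(t)$ is strictly monotone. Inverting $\sigma(t)=2S(\theta(t))=\int_0^{\theta(t)}r^2(\xi)\,d\xi=\varphi_3 t$ gives precisely the stated inverse-function description $t(\theta)=\int_{\theta_0}^{\theta}(r^2(\xi)/\varphi_3)\,d\xi$. Because $\theta$ advances by $2\pi$ exactly when $\sigma$ increases by the full doubled area $2S_0=\int_0^{2\pi}r^2(\theta)\,d\theta$ of $U^{\ast}$, one full revolution in $\theta$ corresponds to the time increment $L=2S_0/|\varphi_3|$. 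All of $h_1,h_2,x,y,\theta$ are $2\pi$-periodic functions of $\theta$, hence $L$-periodic in $t$; the formulas (\ref{h}) for $h_1=h_1(\theta(t))$, $h_2=h_2(\theta(t))$ then follow immediately. Arc-length parametrization of the projection on the Minkowski plane comes from $F(\dot x,\dot y)=F(u_1,u_2)\equiv 1$, the constraint carried through Section~2.

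For the $z$-coordinate I would read off (\ref{eq}) with $\varphi_4=0$: it reduces to $\varphi_1 x+\varphi_2 y+2\varphi_3 z=t$, which rearranges directly to $z(t)=(t-\varphi_1 x(t)-\varphi_2 y(t))/(2\varphi_3)$. The geometric interpretation of $z$ as a swept oriented area then follows from the identity $(\tfrac12 xy+z)^{\cdot}=x u_2$ (noted just before (\ref{psi1})), equivalently from (\ref{aa4}), since $z(t)=-\tfrac12 x(t)y(t)+\int_0^t x\,u_2\,d\tau$ is the standard expression for twice the sectorial area traced by the segment from the origin to $(x(\tau),y(\tau))$. Finally $v=v(t)$ is given by (\ref{aa5}), which holds unconditionally for any admissible control and requires no further argument.

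The main obstacle I anticipate is not any single computation but the clean identification of the projected curve as an \emph{isoperimetrix} rather than merely ``a curve affinely equivalent to $\partial U^{\ast}$.'' One must invoke the precise definition of the isoperimetrix of a Minkowski plane (as in \cite{Lei}, \cite{Bus}) and verify that the specific $90^\circ$ rotation together with the factor $1/\varphi_3$ appearing in (\ref{xy}) matches that definition --- i.e. that the dual/polar relation between $U$ and $U^{\ast}$ is exactly the Busemann duality between a convex body and its isoperimetrix. A secondary technical point is the handling of the at-most-countably-many angles $\theta$ where $r'(\theta)$ fails to exist: here $\partial U^{\ast}$ has corners, the curve $(x,y)$ has corresponding non-smooth points, and one should note (as the paper does elsewhere) that these form a measure-zero set not affecting the integral formulas or the periodicity/arc-length claims.
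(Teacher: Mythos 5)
Your proposal is correct and follows essentially the same route as the paper's proof: constant sweep rate $\dot{\sigma}=\varphi_3$ from (\ref{derst}) giving the inverse-function description of $\theta(t)$ and the period $L=2S_0/|\varphi_3|$, formulas (\ref{xy}) identifying the projection with a rotated, scaled and shifted copy of $\partial U^{\ast}$ (hence an isoperimetrix, by reference to \cite{Ber2}, \cite{Lei}), $z(t)$ from (\ref{eq}) with the sectorial-area interpretation from (\ref{a4})/(\ref{aa4}), and $v(t)$ from (\ref{aa5}). The only cosmetic difference is that you obtain (\ref{xy}) by setting $\varphi_4=0$ directly in (\ref{h12}), whereas the paper integrates $\dot{h}_2=\varphi_3\dot{x}$, $\dot{h}_1=-\varphi_3\dot{y}$ from (\ref{a7}) and (\ref{h34}); these are equivalent one-line arguments.
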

	
\begin{proof}
The statements about the function  $\theta(t)$ follow from (\ref{derst}).	
It follows from (\ref{h34}) and (\ref{area}) that analogously to the second Kepler law the radius-vector-function 
$h(\tau)=(h_1(\tau),h_2(\tau))\in U^{\ast},$ $t_1\leq \tau\leq t_2,$ traces in the plane $h_1,h_2$ 
(or, if it is desired, $u_1,u_2$ or $x,y$) with the standard Euclidean oriented area $(\varphi_3/2)(t_2-t_1).$ 
Consequently, $h(t),$ $t\in\mathbb{R},$ is a periodic function with period $L=2S_0/|\varphi_3|,$ where $S_0$ is the area
of the figure $U^{\ast}.$ Moreover, (\ref{h34}), (\ref{a7}) and (\ref{a4}) imply
formulas (\ref{xy}), i.e. the projection $(x,y)(t)$ of the curve $(x,y,z,v)(t)$ lies on the boundary $I(\varphi_1,\varphi_2,\varphi_3)$ of
the figure obtained by rotation of the figure $U^{\ast}/|\varphi_3|$ by angle $\frac{\pi}{2}$ around the center 
(origin of coordinates) with consequent shift by vector $\left(-\frac{\varphi_2}{\varphi_3},\frac{\varphi_1}{\varphi_3}\right)$. 
Thus, analogously to the case of the Heisenberg group with left-invariant sub-Finsler metric, considered in \cite{Ber2}, $I(\varphi_1,\varphi_2,\varphi_3)$ is an {\it isoperimetrix of the Minkowski plane with the norm $F$} \cite{Lei}.
		
Analogously to \cite{Ber2}, (\ref{xy}) implies that $(x(t),y(t))$ is a periodic curve on 
$I(\varphi_1,\varphi_2,\varphi_3)$ with period $L$ indicated above. It follows from (\ref{eq}) и (\ref{xy}) that
\begin{equation}
\label{z} 
z(t)=(t-\varphi_1x(t)-\varphi_2y(t))/(2\varphi_3)=(\varphi_3t-\varphi_1h_2(t)+\varphi_2h_1(t))/(2\varphi_3^2),
\end{equation}
\begin{equation}
\label{zl}
z(L)=L/(2\varphi_3)=S_0/(|\varphi_3|\varphi_3).
\end{equation}
The statement of Proposition \ref{q2} on the function $z(t)$ follows from (\ref{a4}). 
Since $(x(t),y(t))$ lies on isoperimetrix passing clockwise (counterclockwise) if $\varphi_3<0$ ($\varphi_3>0$), then $z(t)$ 
is a monotone function. In particular, $z(L)$ is oriented area of the figure enveloped by isoperimetrix  $I(\varphi_1,\varphi_2,\varphi_3)$ or, what is the same, area of the figure $U^{\ast}/|\varphi_3|$ taken with the sign equal to
the sign of $z(L)$. 

The last statement was proved earlier.
\end{proof}
	
{\bf 2.3.} Assume that $\varphi_4\neq 0$.

\begin{lemma}
\label{tconst}

If the function  $\theta(t)$ is constant on some non-degenerate interval $J\subset \mathbb{R},$ then on $J$
\begin{equation}
\label{con1}
x(t)\equiv -\frac{\varphi_3}{\varphi_4},\quad y(t)=y_0+\frac{2\varphi_4(t-t_0)}{2\varphi_2\varphi_4-\varphi_3^2},\quad
z(t)=z_0+\frac{\varphi_3(t-t_0)}{\varphi_3^2-2\varphi_2\varphi_4},
\end{equation}
\begin{equation}
\label{con2}
v(t)=v_0+\frac{\varphi_3^2(t-t_0)}{6\varphi_4(2\varphi_2\varphi_4-\varphi_3^2)},
\end{equation}
where  $y_0=y(t_0)$, $z_0=z(t_0)$, $v_0=v(t_0),$ $t_0$ is a point of the interval $J$ closest to zero; 
$v_0$ is calculated by $x_0=-\frac{\varphi_3}{\varphi_4},$ $y_0,$ $z_0$ and (\ref{eq}) for $t=t_0$.
	
In particular, for $\varphi_3=0,$
\begin{equation}
\label{con3}
x(t)\equiv 0,\quad y(t)=y_0+\frac{t-t_0}{\varphi_2},\quad z(t)\equiv z_0,\quad v(t)\equiv v_0=(t_0-\varphi_2y_0)/(3\varphi_4).
\end{equation}
\end{lemma}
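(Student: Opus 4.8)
The plan is to exploit the fact that a constant $\theta$ forces $\dot\theta\equiv 0$, which through the first relation in (\ref{derst}) pins down $x(t)$ completely; then $u_1$ and $u_2$ are read off from the control system (\ref{a4}) together with the maximum condition (\ref{m}), and the remaining coordinates follow by elementary integration of constants.

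First I would observe that on $J$ the hypothesis $\theta(t)\equiv\mathrm{const}$ gives $\dot\theta(t)\equiv 0$, so (\ref{derst}) yields $\dot\sigma(t)=\varphi_3+\varphi_4 x(t)\equiv 0$ on $J$. Since $\varphi_4\neq 0$ by hypothesis, this gives at once $x(t)\equiv -\varphi_3/\varphi_4$, the first assertion of (\ref{con1}). Because $x$ is then constant on $J$ and $\dot x=u_1$ by (\ref{a4}), we conclude $u_1(t)\equiv 0$ on $J$.

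Next I would determine $u_2$. With $u_1\equiv 0$ and $M\equiv 1$, the maximum condition (\ref{m}) reduces to $h_2(t)u_2(t)=1$. Evaluating the second formula in (\ref{h12}) at $x=-\varphi_3/\varphi_4$ gives $h_2=\varphi_2-\varphi_3^2/(2\varphi_4)=(2\varphi_2\varphi_4-\varphi_3^2)/(2\varphi_4)$; the same value follows from the Casimir integral (\ref{c12}) once one notes $h_3=\varphi_3+\varphi_4 x\equiv 0$ on $J$. Consequently $u_2=1/h_2=2\varphi_4/(2\varphi_2\varphi_4-\varphi_3^2)$ is a well-defined constant, and since $\dot y=u_2$, integrating from $t_0$ with $y(t_0)=y_0$ produces the stated formula for $y(t)$.

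Finally I would integrate the last two equations of (\ref{a4}). With $u_1\equiv 0$ they become $\dot z=\tfrac12 x u_2$ and $\dot v=\tfrac1{12}x^2 u_2$, whose right-hand sides are constants after substituting $x=-\varphi_3/\varphi_4$ and $u_2$; integrating from $t_0$ with $z(t_0)=z_0$, $v(t_0)=v_0$ yields the expressions in (\ref{con1}), (\ref{con2}), and $v_0$ is then recovered from the first integral (\ref{eq}) evaluated at $t=t_0$ using $x_0,y_0,z_0$. The special case $\varphi_3=0$ follows by direct substitution: $x\equiv 0$, the $z$- and $v$-derivatives vanish so $z\equiv z_0$ and $v\equiv v_0$, and (\ref{eq}) at $t_0$ collapses to $\varphi_2 y_0+3\varphi_4 v_0=t_0$, giving (\ref{con3}). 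The one delicate point is the non-degeneracy $2\varphi_2\varphi_4-\varphi_3^2\neq 0$, i.e. that $h_2$ does not vanish; this is not an extra hypothesis but is \emph{forced} by (\ref{m}), since $u_1\equiv 0$ and $h_2=0$ would make (\ref{m}) read $0=1$. I expect this to be the only step requiring genuine care, the rest being routine integration.
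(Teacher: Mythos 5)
Your proposal is correct and follows essentially the same route as the paper's proof: use (\ref{derst}) to pin $x\equiv-\varphi_3/\varphi_4$, deduce $u_1\equiv 0$ from (\ref{a4}), get the constant $u_2=1/h_2$ from (\ref{m}) and (\ref{h12}), and integrate the remaining equations, with $v_0$ recovered from (\ref{eq}); the only cosmetic difference is that you integrate (\ref{a4}) directly where the paper invokes the already-integrated formulas (\ref{aa4}), (\ref{aa5}). Your explicit remark that $2\varphi_2\varphi_4-\varphi_3^2\neq 0$ is forced by the maximum condition is a small point the paper leaves implicit, and it is a welcome addition.
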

	
\begin{proof}
If the function  $\theta(t)$ is equal to $\theta_0$ on some interval $J$, then $\dot{\theta}(t)\equiv 0$ and $x(t)\equiv -\varphi_3/\varphi_4$,
$h_2(t)=\varphi_2-\varphi_3^2/(2\varphi_4)$, $t\in J$, due to (\ref{derst}) и (\ref{h12}). It follows from (\ref{a4}) and (\ref{m}) that then
$u_1(t)\equiv 0$, $u_2(t)=1/h_2(t)\equiv (2\varphi_2\varphi_4-\varphi_3^2)/2\varphi_4$, $t\in J$, and the function $y(t)$ on the interval $J$ is determined by the second equality (\ref{con1}). Due to the calculated value $u_2$ and (\ref{aa4}), the function $z(t)$ on the interval $J$
is determined by the third equality (\ref{con1}). The equality (\ref{con2}) follows from (\ref{aa5}) and the first equality in (\ref{con1}).
The equalities  (\ref{con3}) are consequences from (\ref{con1}), (\ref{con2}) и (\ref{eq}).
\end{proof}	

\begin{lemma}
\label{tconst2}	
Suppose that there exist $\Theta_1,\Theta_2\in\mathbb{R}$ such that $\Theta_1< \Theta_2$ and the right-hand side in  (\ref{dt}) is positive for $\theta\in (\Theta_1,\Theta_2)$ and vanishes for $\theta=\Theta_1$ and for $\theta=\Theta_2$, $\theta(t),$ $t\in\mathbb{R},$ is a function admissible by the Maximum Principle. Then $\varphi_4u_1(\theta)>0$ ($\varphi_4u_1(\theta)<0$) for $\theta\in (\Theta_1,\Theta_2),$ sufficiently
close to $\Theta_1$ (respectively, $\Theta_2$), (\ref{u}). If $\theta(t_0)=\Theta_1$ ($\theta(t_0)=\Theta_2$) and $\theta(t)\neq \Theta_1$ 
($\theta(t)\neq \Theta_2$) for all $t<t_0$ or $t>t_0$, sufficiently close to $t_0,$ then  $\dot{\theta}(t)(t-t_0)> 0$ ($\dot{\theta}(t)(t-t_0) <0$) for these $t$.
\end{lemma}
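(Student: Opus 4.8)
The plan is to reduce everything to the sign behaviour of the numerator of the right-hand side of (\ref{dt}). Write $N(\theta)=\varphi_3^2+2\varphi_4\bigl(r(\theta)\sin\theta-\varphi_2\bigr)$, so that the right-hand side of (\ref{dt}) equals $N(\theta)/r^4(\theta)$ and, along any admissible trajectory, $N(\theta(t))=\dot\sigma^2(t)\ge 0$ by (\ref{derst}). Using (\ref{h}) one has $N(\theta)=2\mathcal{E}+2\varphi_4 h_2(\theta)$ with $\mathcal{E}$ the constant from (\ref{c12}); hence $N$ is an affine function of the dual coordinate $h_2$. Differentiating and invoking (\ref{hder}) together with (\ref{u}) gives $N'(\theta)=2\varphi_4 h_2'(\theta)=2\varphi_4 r^2(\theta)u_1(\theta)$, so that
\[
\varphi_4 u_1(\theta)=\frac{N'(\theta)}{2r^2(\theta)},
\]
and since $r^2>0$ the sign of $\varphi_4u_1(\theta)$ coincides with the sign of $N'(\theta)$, i.e. with the direction in which $N$ (equivalently $h_2$, up to the sign of $\varphi_4$) is moving.

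For the first assertion I would use the convexity of the polar figure $U^{\ast}$. Since $\partial U^{\ast}$ is a convex curve with the origin in its interior, the polar angle is strictly monotone along it, so $h_2(\theta)=r(\theta)\sin\theta$ has exactly one maximum and one minimum per period and is monotone on each of the two complementary arcs. The hypothesis that $N>0$ on $(\Theta_1,\Theta_2)$ and $N(\Theta_1)=N(\Theta_2)=0$ means, after dividing by $2\varphi_4$, that $(\Theta_1,\Theta_2)$ is exactly the arc on which $h_2$ lies strictly on one side of the level $h_2=-\mathcal{E}/\varphi_4$; at its left endpoint $\Theta_1$ the curve enters this arc and at the right endpoint $\Theta_2$ it leaves it. Reading off the monotonicity of $h_2$ at these two transversal crossings gives $\varphi_4 h_2'>0$ near $\Theta_1$ and $\varphi_4 h_2'<0$ near $\Theta_2$, i.e. $\varphi_4 u_1(\theta)>0$ for $\theta\in(\Theta_1,\Theta_2)$ close to $\Theta_1$ and $\varphi_4u_1(\theta)<0$ close to $\Theta_2$. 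The same unimodality shows $N<0$ just outside $[\Theta_1,\Theta_2]$ near each endpoint.

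For the second assertion, suppose $\theta(t_0)=\Theta_1$. Then $\dot\theta(t_0)=0$ by (\ref{dt}), hence $\dot\sigma(t_0)=0$ by (\ref{derst}). Because $N<0$ immediately below $\Theta_1$ while $\dot\theta^2=N/r^4\ge0$ must hold along the admissible $\theta(t)$, the trajectory cannot cross into $\{\theta<\Theta_1\}$; thus every $t$ near $t_0$ with $\theta(t)\neq\Theta_1$ satisfies $\theta(t)\in(\Theta_1,\Theta_2)$, close to $\Theta_1$. On such $t$ the first assertion gives $\ddot\sigma(t)=\varphi_4u_1(\theta(t))>0$ by (\ref{pen}). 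Integrating $\ddot\sigma$ from $t_0$, where $\dot\sigma$ vanishes, yields $\dot\sigma(t)(t-t_0)>0$, and since $\dot\theta=\dot\sigma/r^2$ shares the sign of $\dot\sigma$, we conclude $\dot\theta(t)(t-t_0)>0$. The endpoint $\Theta_2$ is treated identically, with $\ddot\sigma<0$ and hence the reversed inequality $\dot\theta(t)(t-t_0)<0$.

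The main obstacle is the first assertion, i.e. pinning down the sign of $\varphi_4u_1$ all the way up to the turning points. Here $r(\theta)$ is only guaranteed to possess one-sided derivatives, $\partial U^{\ast}$ may have corners, and the convention that $r'(\theta)$ denotes an arbitrary value between the one-sided derivatives must be respected; what saves the argument is precisely the convexity of $U^{\ast}$, which forces $h_2(\theta)$ to be monotone on each arc and the level $h_2=-\mathcal{E}/\varphi_4$ to be crossed transversally (the degenerate tangential case $-\mathcal{E}/\varphi_4=\max h_2$ or $\min h_2$ corresponds to an equilibrium and is excluded by the strict positivity of $N$ on $(\Theta_1,\Theta_2)$, being the constant-$\theta$ situation of Lemma \ref{tconst}). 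Verifying that the trajectory genuinely stays where $N\ge0$ and cannot leak across $\Theta_1$ is the second place where this convex, unimodal structure, rather than any smoothness of $F$, is indispensable.
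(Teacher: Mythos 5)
Your argument for the second assertion is, modulo wording, exactly the paper's own proof: the paper disposes of that assertion in a single sentence, saying it follows from the first assertion and (\ref{derst}), which unpacks to precisely your computation $\ddot\sigma(t)=\varphi_4u_1(\theta(t))>0$ near $t_0$, $\dot\sigma(t_0)=r^2(\Theta_1)\dot\theta(t_0)=0$, then integrate. For the first assertion the paper offers no proof at all (inside the proof of Theorem \ref{mt1} it merely appeals to ``reflexiveness'' of finite-dimensional normed spaces), so your convexity argument --- the slice of $U^{\ast}$ at any height strictly between $\min h_2$ and $\max h_2$ meets $\partial U^{\ast}$ in exactly two points, supporting lines there are never horizontal, hence every admissible value of $u_1(\theta)$ in (\ref{u}) is nonzero and of the indicated sign --- is a correct and welcome filling-in of what the paper leaves implicit, and it properly handles the corner points for which the $r'$-convention was introduced.

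The genuine gap is your parenthetical claim that the tangential case $-\mathcal{E}/\varphi_4=\min h_2$ (or $\max h_2$) ``is excluded by the strict positivity of $N$ on $(\Theta_1,\Theta_2)$.'' It is not: having $N\geq 0$ everywhere with zeros exactly at the extremizers of $\varphi_4h_2$ is perfectly compatible with $N>0$ on $(\Theta_1,\Theta_2)$ and $N(\Theta_1)=N(\Theta_2)=0$; this is precisely the situation $\mathcal{E}=\mathcal{E}_0$ of cases 4 and 5 of Theorem \ref{mt1}. In that situation your step ``$N<0$ just outside $[\Theta_1,\Theta_2]$'' fails: $N$ may vanish on a plateau just outside (a horizontal edge of $\partial U^{\ast}$; this subcase is still repairable, since $\dot\theta^2=N/r^4\equiv 0$ there forces $\theta$ to be constant, so the trajectory cannot move into the plateau), or --- worse --- $N$ may be strictly positive just outside, namely when the zero is isolated, e.g.\ when $\Theta_2=\Theta_1+2\pi$. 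In that subcase the no-leak step is not merely unproven but false: when the improper integral (\ref{tth}) converges at the endpoint (which happens when $\partial U$ has corners; the paper's section on square control regions realizes exactly the options 4.2, 4.3, 5.7b), 5.9), an admissible $C^1$ trajectory passes through $\Theta_2$ in finite time with $\dot\theta>0$ on both sides, and then the claimed inequality $\dot\theta(t)(t-t_0)<0$ fails for $t>t_0$. So your proof of the second assertion is complete only in the non-degenerate range $\mathcal{E}_{-1}<\mathcal{E}<\mathcal{E}_0$ --- which, in fairness, is the only case in which the paper actually invokes the lemma, and the lemma's literal statement (together with the paper's one-sentence proof) suffers from the same defect in the degenerate case.
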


\begin{proof}
We note only that the second statement of Lemma is a consequence of the first one and (\ref{derst}).
\end{proof}		

\begin{theorem}
\label{mt1}	
If $\varphi_4\neq 0$ then any extremal on the Engel group starting at the unit is defined by the equations (\ref{xt}),
(\ref{yot}) (with arbitrary measureable integrands of indicated view and continuously differentiable function
$\theta=\theta(t)$ satisfying (\ref{derst}), (\ref{dt})),
\begin{equation}
\label{zt}
z(t)=-\frac{1}{\varphi_4}\left(r(\theta(t))\cos\theta(t)-\varphi_1+\left(\varphi_3+\frac{1}{2}\varphi_4x(t)\right)y(t)\right),
\end{equation}
\begin{equation}
\label{vt}
v(t)=\frac{1}{3\varphi_4}\left(t-\varphi_1x(t)-\varphi_2y(t)-\left(2\varphi_3+\frac{1}{2}\varphi_4x(t)\right)z(t)\right).
\end{equation}   	
	
Let us set
$$\theta_0:=\theta(0),\quad \mathcal{E}_0=\max_{h\in U^{\ast}}\left(-\varphi_4h_2\right),\quad 
\mathcal{E}_{-1}=\min_{h\in U^{\ast}}\left(-\varphi_4h_2\right).$$
The following cases are possible.
	
1. Let  $\varphi_3\neq 0$ and $\mathcal{E}>\mathcal{E}_0$. Then the function $\theta(t)$, $t\in\mathbb{R}$,  is inverse to the function $t(\theta)$
defined by formula
\begin{equation}
\label{tth} t(\theta)=\int_{\theta_0}^{\theta}\frac{r^2(\xi)d\xi}{\varphi_3\sqrt{1+(2\varphi_4/\varphi_3^2)(r(\xi)\sin\xi-\varphi_2)}}, \quad r(\theta_0)\sin\theta_0=\varphi_2.
\end{equation}
	
2. Let $\varphi_3=0$ and $\mathcal{E}=\mathcal{E}_{-1}$. Then $\theta(t)\equiv\theta_0$ and the desired extremal is the metric straight line (\ref{anorm}).
	
3. Let $\mathcal{E}_{-1}< \mathcal{E}<\mathcal{E}_0$. Then we have for some numbers $t_1$, $t_2,$ $t_1\neq t_2,$ for any $t\in\mathbb{R}$ and $k\in\mathbb{Z}$  
\begin{equation}
\label{dts00}
\theta(t+2k(t_2-t_1))=\theta(t),\quad\dot{\theta}(t_i+t)=-\dot{\theta}(t_i-t),\,\,\theta(t_i+t)=\theta(t_i-t),\,\,i=1,2.
\end{equation}
	
3.1. If $\varphi_3\neq 0$ then $t_i=t(\theta_i)$, $i=1,2$, in equalities (\ref{dts00}) are calculated by (\ref{tth}), where 
$\theta_1\neq\theta_2$ are the nearest to $\theta_0$ values such that $\varphi_3(\theta_2-\theta_1)>0$ and the right-hand side in (\ref{dt}) vanishes.
	
3.2. If $\varphi_3=0$ then $\theta_2\neq\theta_1=\theta_0$ and $t_1=0$, $t_2=t(\theta_2)$ in (\ref{dts00}), where
\begin{equation}
\label{tth0}
t(\theta)=\pm\int\limits_{\theta_0}^{\theta}
\frac{r^2(\xi)d\xi}{\sqrt{2\varphi_4( r(\xi)\sin{\xi}-\varphi_2)}},\quad r(\theta_0)\sin\theta_0=\varphi_2,
\end{equation}
and on the right--hand side stands $+$ (respectively, $-$) if $\varphi_4(\theta_2-\theta_0)>0$  (respectively, $\varphi_4(\theta_2-\theta_0)<0$). Here $\theta_2\neq\theta_0$ is a number such that $h_2(\theta_0)=h_2(\theta_2)$ and $\varphi_4(h_2(\theta)-h_2(\theta_0))>0$ for any $\theta$ from interval 
$I=(\min(\theta_0,\theta_2),\max(\theta_0,\theta_2))$.
	
4. Let  $\varphi_3\neq 0$ and $\mathcal{E}=\mathcal{E}_0$. Then there exist the nearest to $\theta_0$ values $\theta_1$, $\theta_2$  such that 
$\theta_1<\theta_0<\theta_2$ and the right-hand side in (\ref{dt}) vanishes for $\theta=\theta_i,$ $i=1,2$. 
If improper integral (\ref{tth}) diverges for $\theta=\theta_1$ and $\theta=\theta_2$, then  $\theta(t)\in (\theta_1,\theta_2),$ $t\in \mathbb{R},$ 
is the inverse function to the function $t(\theta)$ defined by (\ref{tth}). 
If at least one of improper integrals (\ref{tth0}) is finite for $\theta=\theta_1$ and(or) for $\theta=\theta_2$, then the function $\theta(t)$
is not unique and can take constant values on some non-degenerate closed intervals of arbitrary length, on which (\ref{con1}), (\ref{con2}) are valid.
		
5. Let $\varphi_3=0$ and $\mathcal{E}=\mathcal{E}_{0}$. Then there exists the largest segment $[\theta_1,\theta_2]$, $\theta_1\leq\theta_2$,
such that $\theta_0\in [\theta_1,\theta_2]$ and $h_2(\theta)=\varphi_2$ for any $\theta\in [\theta_1,\theta_2]$.
If $\theta_0=\theta_2$ (respectively, $\theta_0=\theta_1$) then further we denote by $t(\theta)$ the integral (\ref{tth}) for $\theta\in [\theta_0,\theta_1+2\pi]$ (respectively, $\theta\in[\theta_1-2\pi,\theta_0]$) without $+$ and $-$. 
	
Then $\theta(t)\equiv\theta_0$ and the desired extremal is the metric straight line (\ref{anorm}) in the following cases:
	
5.1. $\theta_1=\theta_0=\theta_2$ and $t(\theta)=\infty$ for $\theta\nearrow\theta_0$ and for $\theta\searrow\theta_0$;
	
5.2. $\theta_1<\theta_0<\theta_2$;
	
5.3. $\theta_0=\theta_1<\theta_2$ and $t(\theta)=\infty$ for $\theta\nearrow\theta_0$;
	
5.4. $\theta_0=\theta_2>\theta_1$ and $t(\theta)=\infty$ for $\theta\searrow\theta_0$.
	
In all other cases, the function $\theta(t)$ is not unique and can take constant values on some closed intervals of arbitrary length, on which (\ref{con3}) are valid.
\end{theorem}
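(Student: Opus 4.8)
The plan is to reduce the whole statement to the analysis of the single scalar function $\theta(t)$, since all remaining coordinates are already determined by it. First I would record that the formula (\ref{zt}) for $z(t)$ is nothing but the first equation in (\ref{h12}) solved for $z$, after substituting $h_1=r(\theta)\cos\theta$ from (\ref{h}); likewise (\ref{vt}) for $v(t)$ is just (\ref{eq}) solved for $v$. Together with the expressions (\ref{xt}), (\ref{yot}) for $x(t),y(t)$ furnished by Theorem~\ref{mt}, this shows that an extremal with $\varphi_4\neq0$ is completely determined once $\theta(t)$ is known. Hence the entire theorem is a classification of the admissible functions $\theta(t)$ governed by (\ref{derst}) and (\ref{dt}).

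Next I would set up the phase-plane picture. Writing the energy relation (\ref{energy}) as $\tfrac12(\dot\sigma)^2=\mathcal{E}+\varphi_4h_2(\theta)$ and using $\dot\sigma=r^2\dot\theta$ from (\ref{derst}), one gets $\dot\theta^2=2(\mathcal{E}-W(\theta))/r^4(\theta)$ with the \emph{potential} $W(\theta):=-\varphi_4h_2(\theta)=-\varphi_4r(\theta)\sin\theta$, which is precisely (\ref{dt}). By definition $\mathcal{E}_0=\max_{U^{\ast}}W$ and $\mathcal{E}_{-1}=\min_{U^{\ast}}W$, so the admissible set $\{\theta:W(\theta)\le\mathcal{E}\}$ and its boundary (the turning points, where $\dot\theta=0$) are governed by the position of $\mathcal{E}$ relative to these two extrema. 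The sign of $\dot\theta$ is fixed near $t=0$ by $\dot\sigma(0)=\varphi_3$ (from (\ref{derst}) with $x(0)=0$), and the behaviour near turning points is controlled by Lemma~\ref{tconst2}. This single inequality analysis of $\dot\theta^2$ drives all five cases.

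Then I would run the case analysis. If $\varphi_3\neq0$ and $\mathcal{E}>\mathcal{E}_0$ (Case~1), the numerator of (\ref{dt}) is everywhere positive, so $\dot\theta$ never vanishes, $\theta(t)$ is strictly monotone with the sign of $\varphi_3$, and separating variables in $dt=r^2d\theta/\dot\sigma$ with $\dot\sigma=\mathrm{sign}(\varphi_3)\sqrt{\varphi_3^2+2\varphi_4(r\sin\theta-\varphi_2)}$ gives (\ref{tth}), whose inverse is $\theta(t)$. If $\mathcal{E}_{-1}<\mathcal{E}<\mathcal{E}_0$ (Case~3), the motion is trapped between two consecutive turning points $\theta_1,\theta_2$; time-reversibility of the autonomous equation $\dot\theta^2=f(\theta)$ together with Lemma~\ref{tconst2} yields the reflection and periodicity relations (\ref{dts00}) with period $2(t_2-t_1)$, and one only has to separate the subcases $\varphi_3\neq0$ (then $\theta_0$ is an interior point and $t_i=t(\theta_i)$ via (\ref{tth})) and $\varphi_3=0$ (then $h_2(\theta_0)=\varphi_2$ makes $\theta_0$ itself a turning point, so $t_1=0$ and one uses (\ref{tth0})). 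The constant-$\theta$ Case~2 ($\varphi_3=0$, $\mathcal{E}=\mathcal{E}_{-1}$) follows because $W\le\mathcal{E}=\min W$ forces $W\equiv\min W$ along the orbit, so $\theta\equiv\theta_0$ and Lemma~\ref{tconst} (the case (\ref{con3})) produces the straight line (\ref{anorm}).

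The hard part will be the threshold cases $\mathcal{E}=\mathcal{E}_0$ (Cases~4 and~5), where $\theta(t)$ need not be unique. Here the turning points $\theta_i$ are maximisers of $W$, so the solution is of separatrix type, and whether it reaches $\theta_i$ in finite time hinges on the convergence of the improper integrals (\ref{tth}), (\ref{tth0}) at $\theta_i$ --- which is exactly where the arbitrariness of the norm $F$ (possible corners of $U$ and flat faces of $U^{\ast}$) enters. I would argue that divergence forces the purely asymptotic, single-valued inverse of $t(\theta)$ on the open interval, while convergence lets the trajectory arrive at $\theta_i$ in finite time and then dwell there, so by Lemma~\ref{tconst} it can stay constant on intervals of arbitrary length, yielding the non-uniqueness and the piecewise formulas (\ref{con1})--(\ref{con2}). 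For Case~5 ($\varphi_3=0$) I would first isolate the largest segment $[\theta_1,\theta_2]$ on which $h_2\equiv\varphi_2$ (a flat face of $\partial U^{\ast}$), and then combine the location of $\theta_0$ inside or at an endpoint of this segment with the divergence test for $t(\theta)$ as $\theta\nearrow\theta_0$ and $\theta\searrow\theta_0$; this is what distinguishes the metric-straight-line subcases 5.1--5.4 from the non-unique ones. Carefully matching these finite-versus-infinite-time alternatives against the convexity and non-smoothness of $U^{\ast}$, and checking that the reflection argument of Case~3 genuinely breaks down at a convergent separatrix endpoint, is the delicate point of the whole proof.
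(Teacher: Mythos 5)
Your proposal is correct and takes essentially the same route as the paper: the same reduction of (\ref{zt}), (\ref{vt}) to (\ref{h12}), (\ref{h}), (\ref{eq}), followed by the same pendulum-type case analysis of (\ref{dt}) according to the sign of its right-hand side, with Lemma~\ref{tconst}, Lemma~\ref{tconst2}, and the convergence or divergence of the improper integrals (\ref{tth}), (\ref{tth0}) playing exactly the roles you assign them. The one step to make explicit is the finiteness of $t_1,t_2$ in Case~3, which your period $2(t_2-t_1)$ tacitly assumes; the paper obtains it from the reflexivity argument ($u_1(\theta_i)\neq 0$) together with a Taylor-formula estimate giving an integrable square-root singularity of the integrand at the turning points.
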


\begin{proof}
The first statement follows from Theorem \ref{mt} and (\ref{h12}), (\ref{h}), (\ref{eq}).

Let us prove statements of theorem about the function $\theta(t).$ 
To do this, knowing the sign of the derivative $\dot{\theta}(\theta),$ we find $\dot{\theta}(\theta)$ from (\ref{dt}), where $\dot{\theta}$ denotes the derivative with respect to $t.$ Integrating this function, we will find  $\theta(t).$ 

1. Indicated conditions mean that $\varphi_3\neq 0$ and the right-hand side in (\ref{dt}) is positive for all $\theta\in\mathbb{R}$.
Then due to (\ref{derst}), (\ref{dt}),
\begin{equation}
\label{dth}
\dot{\theta}(\theta)=(\varphi_3/r^2(\theta))\sqrt{1+ (2\varphi_4/\varphi_3^2)(r(\theta)\sin\theta-\varphi_2)},
\end{equation}
whence follows (\ref{tth}). Moreover, on the ground of (\ref{derst}), (\ref{dth}),
\begin{equation}
\label{xot} 
x(t)=(\varphi_3/\varphi_4)(\sqrt{1+ (2\varphi_4/\varphi_3^2)(r(\theta(t))\sin\theta(t)-\varphi_2)}-1),
\end{equation}
and $y(t),$ $t\in\mathbb{R},$ is defined by (\ref{yot}).

2. Indicated conditions mean that $\varphi_3=0$ and the right-hand side in (\ref{dt}) is non-positive for all $\theta\in\mathbb{R}$.
Then $h_2(\theta_0)=\varphi_2$ is maximal (respectively, minimal) value of the second component for $h\in U^{\ast}$ if $\varphi_4>0$ (respectively, $\varphi_4< 0$), $\dot{\theta}\equiv 0$, $\theta(t)\equiv\theta_0$, and due to (\ref{derst}) we get $x(t)\equiv 0$ and 
the metric straight lines (\ref{anorm}).

We shall use Lemma \ref{tconst2} without mentions to prove the remaining statements.

3. Indicated conditions mean that the right-hand side in (\ref{dt}) takes both positive and negative values.

3.1. At first, let us consider the case $\varphi_3\neq 0.$
It is clear that there exist $\theta_1$, $\theta_2$ as in the statement of p.\,3 in Theorem  \ref{mt1}. 
In consequence of reflexiveness in passing to the dual normed space for finite-dimensional case,
for all values $\theta$ sufficiently close to $\theta_1$ (respectively, $\theta_2$), the first formula in (\ref{u}) defines values $u_1(\theta)\neq 0$ of the same sign, but of opposite signs for $\theta_1$ and $\theta_2.$ 
Then $\theta_1<\theta_0<\theta_2<\theta_1+2\pi$ if $\varphi_3>0$, 
$\theta_2<\theta_0<\theta_1<\theta_2+2\pi$ if $\varphi_3<0$, 
both values $t_i=t(\theta_i),$ $i=1,2,$ are finite and it is defined a monotone continuously differentiable function
$\theta(t),$ $t_1\leq t\leq t_2,$ with zero one-sided derivatives on the ends. According to what has been said, the equalities (\ref{dts00}) are valid which uniquely determine the function $\theta(t),$ $t\in\mathbb{R}.$

3.2. It is clear that in the case $\varphi_3=0$  there exists $\theta_2$ for $\theta_1=\theta_0$ as in the statement of p.\,3.2 in Theorem  \ref{mt1}. In consequence of the Taylor formula applied to the corresponding one-sided derivatives of the first order, 
subradical function in the denominator of integrand in (\ref{tth0}) has order equal to $1/2$ relative to
$(\xi-\theta_0)^2$ and $(\xi-\theta_2)^2$  when $\xi\in I$ and $\xi\rightarrow \theta_0,$ $\xi\rightarrow\theta_2$ 
respectively. Therefore, the function  $t(\theta)$ calculated by (\ref{tth0}) is finite for all $\theta\in \overline{I}$, $t_2>0$, and for $\varphi_4(\theta_2-\theta_0)>0$ (respectively, $\varphi_4(\theta_2-\theta_0)<0$) on the segment $[0,t_2]$ the increasing (respectively, decreasing) function $\theta(t)$ is defined. The function is inverse to the function $t(\theta)$, where on the right-hand side of formula (\ref{tth0}) stands $+$ (respectively, $-$). By reflexiveness, for all sufficiently close to $\theta_0$ (respectively, $\theta_2$) values $\theta\in I$ the first formula in (\ref{u}) defines $u_1(\theta)\neq 0$ of the same sign, but of opposite signs for $\theta_0$ and $\theta_2$. 
Therefore the function $\theta(t)\in \overline{I},$ $t\in\mathbb{R},$ is even, periodic with period $2t_2,$ alternately
increasing and decreasing on respective segments of length $t_2$, and relations (\ref{dts00}) are valid, uniquely determining the function $\theta(t)$, $t\in\mathbb{R}$. According to what has been said, the function $x(t)$ for $t\in [0,t_2]$ is defined by formula
\begin{equation}
\label{xfromt}
x(t)=({\rm sgn}(\dot{\theta}(t))/\varphi_4)\sqrt{2\varphi_4(r(\theta(t))\sin\theta(t)-\varphi_2)},
\end{equation}
$x(t)$ is odd,  $x(t+2kt_2)=x(t),$ $k\in\mathbb{Z},$ and $x(t+t_2)=-x(t_2-t),$ $t\in\mathbb{R}$. The function $y(t)$, $t\in\mathbb{R}$, is defined by formula (\ref{yot}).

4. Indicated conditions mean that $\varphi_3\neq 0$, the right-hand side in (\ref{dt}) is non-negative, and there exists the only
$h_2^0\neq \varphi_2$ with some $h=(h_1,h_2^0)\in \partial U^{\ast}$ such that the the right-hand side in (\ref{dt}) vanishes. It is clear that 
$h_2^0$ is minimal (respectively, maximal) value of the second component for points from $U^{\ast}$ if $\varphi_4> 0$ (respectively, $\varphi_4 < 0$) and $h_2^0= \varphi_2 - \varphi_3^2/2\varphi_4$ in consequence of  (\ref{derst}) and (\ref{h12}). In addition, the vector $h\in \partial U^{\ast}$ with $h_2=h_2^0$ is not unique if $\partial U^{\ast}$ is not strictly convex at points 
$h$ with $h_2=h_2^0$, in other words, if $\partial U$ is not differentiable at the point $u^0=(0,1/h_2^0)$. 
In any case there exist nearest to $\theta_0$ values $\theta_1<\theta_0$ and $\theta_2>\theta_0$ such that $r(\theta_i)\sin\theta_i=h_2^0,$ $i=1,2.$ 

4.1. If $t(\theta_i)=\pm\infty,$ $i=1,2,$ then $\theta(t)\in (\theta_1,\theta_2)$, $t\in \mathbb{R}$ is the inverse function to the function
$t(\theta)$ defined by (\ref{tth}). For example, this is true if subradical function in the denominator of integrand in (\ref{tth}) 
has orders not less than one relative to
$(\xi-\theta_1)^2,$ $(\xi-\theta_2)^2$  under $\xi\searrow \theta_1,$ $\xi\nearrow\theta_2$ 
respectively (which is satisfied if there exist the usual second derivatives $r''(\theta_i),$ $i=1,2$). Indicated
conditions may fail even under the existence of usual derivatives $r'(\theta_i),$ $i=1,2.$

4.2. Let $t_i:=t(\theta_i)$ be finite for $i=i_1$ and infinite for $i=i_2\neq i_1.$ Then  $\dot{\theta}(t_{i_1})=0,$ 
$t_{i_2}={\rm sgn}(\phi_3(i_2-i_1))\infty$ and in interval $I$ between $t_{i_1}$ and $t_{i_2}$ is defined
the function $\theta(t)$.  For all $t\in\mathbb{R}-I,$ the function could be defined as $\theta(t)=\theta(2t_{i_1}-t)$. 
It is possible also that $\theta(t)\equiv \theta_{i_1}$ if and only if
$t$ belongs to the closure of some nonempty open interval $I_1\subset \mathbb{R}-I,$ where 
$\overline{I_1}\cap \overline{I}=t_{i_1}.$ If $I_1$ is finite then the graph of the function $\theta(\tau)$
on $\mathbb{R}-(\overline{I}\cup \overline{I_1}):=I_2$ of the last solution is obtained from the graph of the first solution
by shift of interval $\mathbb{R}-\overline{I}$ to $I_2.$ 

4.3. Let both $t_i,$ $i=1,2$ are finite. Then $\dot{\theta}(t_1)=\dot{\theta}(t_2)=0$ and on segment $I$ with ends  $t_1\neq t_2$ is uniquely
defined the function $\theta=\theta(t)$. Under condition 4.3.1: $\theta_2 =\theta_1 + 2\pi,$  the graph
of a continuously differentiable function $\theta(t),$ $t\in \mathbb{R},$ can admit parts obtained from the graph of the function on $I$
or its reflection with respect to straight line $t=t_2$ by a combination of parallel vertical
shifts  by values, equal to $2k\pi$ for some $k\in \mathbb{Z},$ and parallel horizontal shifts, with adjacent closed intervals of arbitrary
lengths of constancy of the function $\theta=\theta(t)$. Under condition 4.3.2: $\theta_2<\theta_1 + 2\pi,$ 
are admissible all continuous functions $\theta(t),$ $t\in \mathbb{R},$ whose graphs on some segments of variable $t$ 
with length $|t_2-t_1|$ are horizontal shifts of its graph on $I$ or its reflection
with respect to $t=t_2$ with values $\theta_1$ and $\theta_2$ at ends of these segments, with adjacent closed intervals of arbitrary
lengths for constancy of the function $\theta=\theta(t)$.

5. Indicated conditions mean that $\varphi_3=0$ and the right-hand side in (\ref{dt}) is non-negative for all $\theta\in\mathbb{R}$. Then
$h_2(\theta_0)=\varphi_2$ is minimal (respectively, maximal) value of the second component for $h\in U^{\ast}$ if $\varphi_4> 0$ 
(respectively, $\varphi_4 < 0$).
In addition, the vector $h\in \partial U^{\ast}$ with $h_2=\varphi_2$ is not unique if $\partial U^{\ast}$ is not strictly convex at points 
$h$ with $h_2=\varphi_2$, in other words, if $\partial U$ is not differentiable at the point $u^0=(0,1/\varphi_2)$. 
In general, there exists the largest segment  $[\theta_1,\theta_2]$, $\theta_1\leq\theta_2$, such that $\theta_0\in [\theta_1,\theta_2]$ and $h_2(\theta)=\varphi_2$ for any $\theta\in [\theta_1,\theta_2]$.

Let us consider again the improper integral (\ref{tth0}). It is clear that $\theta(t)\equiv\theta_0$ and we obtain only one of two
metric straight lines (\ref{anorm}) in each case 5.1--5.4 of Theorem \ref{mt1}. In all other cases, there also may be such solutions.

We shall indicate {\it all other possible extremals} in remaining cases:

5.5.  $\theta_1=\theta_2=\theta_0$, $\tau(\theta)$ is finite for $\theta\searrow\theta_0$
and $\tau(\theta)=\infty$ for $\theta\nearrow\theta_0$;

5.6. $\theta_1=\theta_2=\theta_0$, $\tau(\theta)$ is finite for $\theta\nearrow\theta_0$
and $\tau(\theta)=\infty$ for $\theta\searrow\theta_0$;

5.7. $\theta_1<\theta_0=\theta_2$ and 
$\tau(\theta)$ is finite for $\theta\searrow\theta_0$;

5.8. $\theta_0=\theta_1<\theta_2$ and $\tau(\theta)$ is finite for $\theta\nearrow\theta_0$;

5.9. $\theta_1=\theta_2=\theta_0$, $\tau(\theta)$ is finite for $\theta\searrow\theta_0$ and for $\theta\nearrow\theta_0$.

In all these cases, it may be that $\theta(t)\equiv \theta_0$ on some finite or infinite closed interval $J$ including $0$.
Assume that $J\neq\mathbb{R}$ and $\theta(t)\neq \theta_0$ for $t\notin J$  close enough to $J.$ 

a) Let us consider the cases 5.7 under condition $t(\theta_1+2\pi)=\infty$ and 5.5 (the cases 5.8 with condition $\tau(\theta_2-2\pi)=\infty$ and 5.6)).
If $\sup(J)=t_2<+\infty$ then  on the interval $J_2$ between $t_2$ and $+\infty$ is defined the function $\theta(t)=\Theta(t-t_2)$, where 
$\Theta(t),$ $t\geq 0,$ is inverse to the function  $t(\theta)$ from (\ref{tth0}) with $+$ (respectively, $-$) on the right-hand-side. If $\inf(J)=t_1> -\infty$ then on the interval $J_1$ between $t_1$ and $-\infty$ is defined the function $\theta(t)=\Theta(t_1-t)$. 

With the same signs, on the ground of (\ref{derst}), (\ref{dth}), the functions $x(t)$ and $y(t)$ are defined by (\ref{xfromt}) and (\ref{yot}) respectively.

b) Let $t_3=t(\theta_1+2\pi)$ be finite in the case 5.7. 
Then $t_3>0$ and the function $\Theta(t),$ $t\in I=[0,t_3],$  is determined which is inverse to the function $t(\theta)$ from (\ref{tth0}) with $+$ on the right-hand-side. 
All continuous functions $\theta(t),$ $t\in \mathbb{R},$ with $\theta(0)=\theta_0$ are admissible whose graphs on some segments of the variable $t$ with length $t_3$ are horizontal shifts of the graph of the function $\Theta(t)$ on $I$ or its reflection relative to the vertical line $t=t_3$, with adjacent closed intervals of arbitrary
lengths on which $\theta$ takes constant values $\theta_0$ or $\theta_1+2\pi.$ In all considered cases, the functions $x(t)$ and $y(t)$ are defined by (\ref{xfromt}) and (\ref{yot}) respectively. The case 5.8 with finite $t_2:=t(\theta_2-2\pi)$ is considered in similar way.

5.9. Assume now that $\theta_1=\theta_2=\theta_0$, $t(\theta)$ is finite for $\theta\searrow\theta_0,$ and  $t_3:=t(\theta_1+2\pi)$. Then
$t_3>0$ and all continuously differentiable functions $\theta(t),$ $t\in \mathbb{R},$ are admissible whose  graphs contain parts obtained from the graph of the function $\Theta(t)$ on $I$ from 5.7b) or its reflection relative to the straight line $t=t_3$ by combinations of
vertical parallel shifts by values equal to $2k\pi$ for some  $k\in \mathbb{Z}$ and horizontal parallel shifts,  with adjacent closed intervals with arbitrary lengths of constancy of $\theta$. 
\end{proof}

\section{About cases with square control regions}

The norm $F_{\alpha}$ on $D(e)$ is defined by formula
\begin{equation}
\label{norma}
F_{\alpha}(u_1,u_2)=\max\{|u_1\cos\alpha+u_2\sin\alpha|,|-u_1\sin\alpha+u_2\cos\alpha|\},\quad \alpha\in [0,\pi/2).
\end{equation} 

\begin{remark}
In paper \cite{AS5}, such metrics were considered for $0\leq\alpha\leq\pi/4$.
\end{remark}

The unit ball $U_{\alpha}=\{(u_1,u_2):\,F_{\alpha}(u_1,u_2)\leq 1\}$ is obtained from the 
unit ball $U:=U_{0}$ of the norm $F(u_1,u_2)=\max\{|u_1|,|u_2|\}$ by rotation by the angle $\alpha,$ and
$\partial U_{\alpha}$ is a described square around the unit circle $S^1=\{(u_1,u_2):\, u_1^2+ u_2^2=1\}$ with four tangency points.
The polar curve $U_{\alpha}^{\ast}$ is the convex hull of these points and $\partial U_{\alpha}^{\ast}$ is the isoperimetrix to
the Minkowski plane $(D(e),F_{\alpha}).$ 

The following proposition holds.

\begin{proposition}
\label{polars}
The polar equation of the square $\partial U_{\alpha}^{\ast}$ has a form
$$r(\theta)=\frac{\sqrt{2}}{2\cos\left(\theta-\alpha-\frac{\pi}{4}\right)},\quad \alpha\leq \theta\leq \alpha+\frac{\pi}{2},\quad
r\left(\theta+\frac{\pi}{2}\right)=r(\theta),\quad \theta\in\mathbb{R}.$$
In addition,
$$r'(\theta)=\frac{\sqrt{2}\sin\left(\theta-\alpha-\frac{\pi}{4}\right)}{2\cos^2\left(\theta-\alpha-\frac{\pi}{4}\right)},\quad\alpha < \theta < \alpha+\frac{\pi}{2},$$
$$ -1\leq r'(\alpha)\leq 1,\quad r'\left(\theta+\frac{\pi}{2}\right)=r'(\theta),\quad \theta\in\mathbb{R}.$$
\end{proposition}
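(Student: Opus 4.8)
The plan is to exploit the fact, recorded in the paragraph immediately preceding the statement, that $\partial U_\alpha^{\ast}$ is the square whose vertices are the four tangency points of the inscribed unit circle with $\partial U_\alpha$. Since $U_\alpha$ is the rotation by $\alpha$ of the standard square $U_0=\{\max(|u_1|,|u_2|)\le 1\}$, whose inscribed circle touches the sides at $(\pm1,0)$ and $(0,\pm1)$, these tangency points are the rotations by $\alpha$ of those four points; equivalently they are the unit vectors $(\cos(\alpha+k\pi/2),\sin(\alpha+k\pi/2))$, $k=0,1,2,3$. Thus $\partial U_\alpha^{\ast}$ is a square inscribed in the unit circle with vertices at polar angles $\alpha,\ \alpha+\tfrac{\pi}{2},\ \alpha+\pi,\ \alpha+\tfrac{3\pi}{2}$, and by its fourfold rotational symmetry it suffices to determine $r(\theta)$ on the single edge joining the vertices at angles $\alpha$ and $\alpha+\tfrac{\pi}{2}$; the relation $r(\theta+\tfrac{\pi}{2})=r(\theta)$ then follows automatically.

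First I would find the supporting line of this edge. It is a chord of the unit circle subtending a central angle $\tfrac{\pi}{2}$, so the foot of the perpendicular from the origin lies in the bisecting direction $\alpha+\tfrac{\pi}{4}$ at distance $\cos\tfrac{\pi}{4}=\tfrac{\sqrt2}{2}$. Hence the edge lies on the line with polar equation $r\cos(\theta-\alpha-\tfrac{\pi}{4})=\tfrac{\sqrt2}{2}$, which gives at once
$$r(\theta)=\frac{\sqrt{2}}{2\cos\left(\theta-\alpha-\frac{\pi}{4}\right)},\qquad \alpha\le\theta\le\alpha+\frac{\pi}{2}.$$
Differentiating on the open interval $(\alpha,\alpha+\tfrac{\pi}{2})$, where the vertex corners are excluded, yields the stated expression for $r'(\theta)$ after applying $(\sec)'=\sec\tan$ and rewriting $\tan/\cos$ as $\sin/\cos^2$.

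It remains to treat the vertex $\theta=\alpha$, where $r$ has a corner and only one-sided derivatives exist. On the preceding edge $\theta\in[\alpha-\tfrac{\pi}{2},\alpha]$ the periodicity gives $r(\theta)=\tfrac{\sqrt2}{2}\sec(\theta-\alpha+\tfrac{\pi}{4})$, and evaluating the one-sided limits I would obtain $r'(\alpha^{-})=1$ from the left and $r'(\alpha^{+})=-1$ from the right. Under the convention adopted in the paper, of denoting by $r'(\alpha)$ any value lying between the one-sided derivatives, this gives $-1\le r'(\alpha)\le 1$; together with $r'(\theta+\tfrac{\pi}{2})=r'(\theta)$, inherited from the periodicity of $r$, the proposition is complete. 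The computation is in essence a single support-line calculation, so there is no genuine obstacle; the only point demanding care is the bookkeeping at the corner, namely correctly identifying the two adjacent edges and their one-sided slopes so as to pin down the interval $[-1,1]$.
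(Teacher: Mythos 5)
Your proof is correct. The paper actually states Proposition~\ref{polars} with no proof at all, treating it as an elementary consequence of the description given just before it (that $\partial U_{\alpha}^{\ast}$ is the inscribed square whose vertices are the four tangency points of $S^1$ with $\partial U_{\alpha}$, at polar angles $\alpha+k\pi/2$); your support-line computation $r\cos\left(\theta-\alpha-\frac{\pi}{4}\right)=\frac{\sqrt{2}}{2}$, the differentiation on the open edges, and the corner bookkeeping $r'(\alpha^{-})=1$, $r'(\alpha^{+})=-1$ combined with the paper's stated convention that $r'(\theta)$ denotes any value between the one-sided derivatives, is exactly the omitted calculation, carried out correctly.
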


{\bf 1.}  It follows from (\ref{norma}) that $F_{\alpha}(0,1)=\cos\alpha$ if $0\leq\alpha\leq\pi/4,$ and $F_{\alpha}(0,1)=\sin\alpha$ if 
$\pi/4<\alpha <\pi/2;$ in abnormal case, the equations (\ref{anorm}) has the forms
$$x(t)=z(t)=v(t)\equiv 0,\quad y(t)=
\left\{\begin{array}{rl}
\pm\frac{t}{\cos\alpha},\,\,\text{if }\,\,0\leq\alpha\leq\pi/4,\\
\pm\frac{t}{\sin\alpha},\,\,\text{if }\,\,\pi/4<\alpha <\pi/2.
\end{array}\right.$$

{\bf 2.1.} The pair $(\varphi_1,\varphi_2)=r(\theta_0)(\cos\theta_0,\sin\theta_0)$ is in $\partial U_{\alpha}^{\ast}$. 

At first assume that the point $(\varphi_1,\varphi_2)$ is not a vertex of the square $\partial U_{\alpha}^{\ast}$. Then (\ref{u}) gives unique solution $(u_1(t),u_2(t))=(u_1(\theta_0),u_2(\theta_0))$ that is a vertex of the square $\partial U_{\alpha}$ such that
$\angle((u_1(\theta_0),u_2(\theta_0)),(\varphi_1,\varphi_2))< \pi/4,$ and the system (\ref{a4}) has the only solution, namely the one-parameter subgroup 
$$x(t)=u_1(\theta_0)t,\quad y(t)=u_2(\theta_0)t,\quad z(t)\equiv 0,\quad v(t)\equiv 0.$$ 

Now let the point  $(\varphi_1,\varphi_2)$ be one of the vertices of the square $\partial U_{\alpha}^{\ast}$.
In this case, there exists a segment $\Delta$ (a side of the square $\partial U_{\alpha}$) of solutions $(u_1,u_2)=(u_1(\theta_0),u_2(\theta_0))$ to equations (\ref{u}) such that $\angle((u_1,u_2),(\varphi_1,\varphi_2))\leq \pi/4.$
Every measu\-rable function $(u_1(t),u_2(t))\in\Delta$ defines a curve (\ref{xt}), (\ref{yot}), (\ref{aa4}), (\ref{aa5}). 

In any case, we get only metric  straight lines.

\begin{example}
If $\alpha=\varphi_2=\varphi_3=\varphi_4=0,$ $\varphi_1=1,$ then $\theta(t)\equiv \theta_0=0,$ $t\in\mathbb{R},$ arbitrary vector--function of the kind
\begin{equation}
\label{exam}
u(t)=(1,u_2(t)),\quad -1\leq u_2(t)\leq 1, \quad t\in\mathbb{R},
\end{equation} 
with measurable real function $u_2=u_2(t)$, and the covector function $\psi(t)=(1,0,0,0)$ satisfy the Pontryagin Maximum Principle for $t\in\mathbb{R}$,
moreover the corresponding (extremal) trajectory $g=g(t)$, $t\in\mathbb{R}$, with origin $g(0)=e$ is the metric stright line. 
Thus, in general case, when searching for extremals and even geodesics of left-invariant sub-Finsler metric on Lie group, it is not possible to exclude the control from the Hamiltonian system for the Pontryagin Maximum Principle.
This statement is true for any non-strictly convex control region $U\subset D(e),$ in other words, for the polar figure $U^{\ast}$ with  non-differentiable boundary $\partial U^{\ast}.$ 
\end{example}

{\bf 2.2.} Arguing as in the proof of Proposition \ref{q2}, we get that $h(t)$, $u(t)$, $x(t)$, $y(t)$, $t\in\mathbb{R}$, are periodic functions with period $L=4/|\varphi_3|$ and equalities (\ref{xy}), (\ref{z}), (\ref{zl}) and (\ref{aa5}) hold, i.e., the projection $(x,y)(t)$ of the curve $(x,y,z,v)(t)$
lies on a square obtained from $\partial U_{\alpha}^{\ast}/|\varphi_3|$ by shifting its center to 
the point $\left(-\frac{\varphi_2}{\varphi_3},\frac{\varphi_1}{\varphi_3}\right)$. The control is piecewise constant on the complement to a countable set of isolated points.

{\bf 2.3.} Cases 1, 2, 3.1 and 3.2 of Theorem \ref{mt1} are possible. The case 4 of Theorem \ref{mt1} is possible only for the option 4.3 considered in the proof of Theorem \ref{mt1}, moreover, the option 4.3.2 is possible only for $\alpha=\pi/4$. The case 5 of Theorem \ref{mt1} is possible for the options  5.2, 5.7b), 5.8b) (for $\alpha=\pi/4$), 5.9 (for $\alpha\neq\pi/4$), considered in the proof of Theorem \ref{mt1}. 
The description of all possible options for the function $\theta(t),$ $t\in\mathbb{R},$ in these cases are given in the proof of Theorem \ref{mt1}, the functions $x(t)$ and $y(t)$ are found by (\ref{xt}), (\ref{yot}) with usage of Proposition \ref{polars}, and the functions
$z(t),$ $v(t),$ $t\in\mathbb{R},$ are found by (\ref{zt}), (\ref{vt}).  

\section{Extremals of left-invariant sub-Finsler quasimetric on the Engel group}

The proofs and results of our paper are valid also for the case of a left-invariant sub-Finsler
quasimetric on the Engel group. Quasimetrics have all properties of metric, except possibly symmetry property $d(p,q)=d(q,p).$
For this, we need to make only the following changes in the text:

1) As $U$, we take an arbitrary convex (two-dimensional) figure, containing inside $0$, perhaps $U\neq -U$.

2) Instead of references to the reflexiveness in passing to the dual
normed vector space for the finite-dimensional case we must refer to a theorem on the bipolar figure ($U^{\ast\ast}=U$, see Theorem 14.5 in \cite{Rock}).

\end{document}